\newtheorem{The}{Theorem}[section]
\newtheorem{Lem}[The]{Lemma}
\newtheorem{Cor}[The]{Corollary}
\theoremstyle{definition}
\newtheorem{Def}[The]{Definition}
\newcommand{\R}{\mathbb R}
\newcommand{\cond}{\mathrm{cond}}
\begin{document}

\date{\small\textsl{\today}}
\title{\Large 
Error and Stability Estimates of a Least-Squares Variational Kernel-Based Method for Second Order Elliptic PDEs
}
\author {\normalsize{SALAR SEYEDNAZARI $^{\mbox{\footnotesize a}}$\thanks{E-mail address:
\texttt{salarseyednazari@gmail.com},}\hspace{1mm}, MEHDI TATARI
 $^{\mbox{\footnotesize a,c}}$\thanks{E-mail
address: \texttt{mtatari@cc.iut.ac.ir}. (corresponding author). This research was in part supported by a grant from IPM (No.95650422)},\hspace{1mm}, DAVOUD MIRZAEI $^{\mbox{\footnotesize b}}$\thanks{E-mail
address: \texttt{d.mirzaei@sci.ui.ac.ir}.}}\\
\footnotesize{\em $^{\mbox{\footnotesize a}}$Department$\!$
of Mathematical Sciences,}\vspace{-2mm}\\
\footnotesize{\em Isfahan University of Technology, Isfahan, 84156-83111, Iran.}\\
\footnotesize{\em $^{\mbox{\footnotesize b}}$Department of Applied
Mathematics and Computer Science, Faculty of Mathematics and Statistics}\vspace{-2mm}\\
\footnotesize{\em Unievrsity of Isfahan, 81746-73441 Isfahan, Iran.}\\
\footnotesize{\em $^{\mbox{\footnotesize c}}$School of Mathematics, Institute for Research in Fundamental Sciences (IPM),}\\
\footnotesize{\em  P.O. Box: 19395-5746, Tehran, Iran. }\\
}\maketitle

\maketitle \thispagestyle{empty}



\begin{abstract}
We consider a least-squares variational kernel-based method for numerical solution of second order elliptic partial differential equations 
on a multi-dimensional domain.
In this setting it is not assumed that the differential operator is self-adjoint or positive definite as
it should be in the Rayleigh-Ritz setting. However, the new scheme leads to a symmetric and positive definite algebraic
system of equations. 
Moreover, the resulting method does not rely on certain subspaces satisfying the boundary conditions.
The trial space for discretization is provided via standard
kernels that reproduce the Sobolev spaces as their native spaces. 
The error analysis of the method is given, but it is partly subjected to an
inverse inequality on the boundary which is still an open problem.
The condition number of the final linear system is approximated in terms of the smoothness of the kernel and the discretization quality.
Finally, the results of some computational experiments support the theoretical error bounds.\\

$\mathbf{Keywords}$: Meshfree methods, Least-squares principles, Radial basis functions, Inverse inequalities, Error estimates.
\end{abstract}

\textbf{AMS subject classifications: 65N12, 65N15, 65D15, 65N99.}
\section{Introduction}\label{Int}
It is of interest to extend the theory of least-squares methods for numerical treatment of elliptic systems.
Some advantageous features are obtained via Least-Squares Principles (LSP) because of using the artificial energy functional to provide
a Rayleigh-Ritz-like setting; see \cite{AZIZ,PB3}.
One of the most attractive features of the least-squares methods is that the choice of approximating spaces
is not subject to the Ladyzhenskaya-Babuska-Brezzi (LBB) condition \cite{Brezzi}. Indeed,
the computation of stationary points, that is the paradigm of mixed-Galerkin methods, demands strict compatibility
{LBB condition} for continuous and discrete spaces,
if stable and accurate approximations are desired.
Furthermore, standard and mixed-Galerkin methods usually
produce nonsymmetric systems of algebraic equations
which must then be solved by direct
or non-robust iterative methods,
while least-squares methods involve only symmetric and positive definite systems.
The other motivation to extend the least-squares methods to PDE problems
of general boundary conditions, including nonhomogeneous
ones, is a greatly facilitated treatment with boundary conditions because their residuals can be incorporated into the least-squares functional \cite{PB3}.

The theory of least-squares methods
in numerical solution of elliptic boundary value problems was considered 
in Bramble and Schatz \cite{Bram2,Bram3} and Bramble and Nitsche \cite{Bram1}.
An extension to an elliptic equation of order
$2m$ {was} given in \cite{Bram3}, and an important simplifications in the analysis {was} presented in \cite{Ba}.
Also, a least-squares theory was developed for an elliptic system of Petrovsky type in \cite{W.W} and
 for elliptic systems of Agmon-Douglis-Nirenberg (ADN) type in \cite{AZIZ}.
We refer the reader to the survey articles \cite{PB1,PB2} and books \cite{PB3,Jia1} for more details.

In this paper we develop a least-squares method for numerical solution of
second order elliptic boundary value problems 
via reproducing kernels of Sobolev spaces $H^\tau(\Omega)$, $\Omega\subset \mathbb{R}^d$,  for some
$\tau  >d/2$, where $\Omega$ is a domain on which the PDE is posed. In particular we use radial basis function (RBF) approximations.
Although, we focus on the second order elliptic boundary value problems,
a generalization to higher order equations can be done in an obvious way.
The method involves the minimization of a least-squares functional that
consists of a weighted sum of the residuals occurring in the differential equation and the
boundary conditions.
One of the additional advantages is that the method provides a more accurate solution than one might be
expected from the approximating space. The method requires an approximating space
consisting of functions which are smooth enough to lie in the domain of the elliptic operator. Thus, for the success of the least-squares method
it is crucial to choose proper function spaces in which the boundary value problem is well-posed.

In a natural way,
straightforward least-squares methods for second or higher order differential equations require
finite dimensional subspaces of ${H^k}\left( \Omega  \right)$, $k \ge 2$. It is well known in the theory of finite element methods that the construction of such subspaces is much more difficult than those of
${H^1}\left( \Omega  \right)$.
Because the latter only need to be at most continuous
whereas, in practice, the former have to consist of $k$ times differentiable functions.
The ${C^k}\left( \Omega  \right)$ regularity requirement complicates finite element spaces in several ways.
First, it
cannot be satisfied unless the reference polynomial space is of a sufficiently high degree.
Second, unisolvency sets of ${C^k}\left( \Omega  \right)$ elements include both values of a function and its derivatives.
This fact greatly complicates the construction of bases and the assembly of the matrix problem.
Finally, ${C^k}\left( \Omega  \right)$ elements are not necessarily affine
equivalent because affine mappings do not necessarily preserve the normal direction.
To overcome this problem in finite element methods, 
the given problem is converted into a first order system.
Since in kernel-based method one can simply construct arbitrary smooth approximation spaces,
converting the problem into a first order system of equations may not be actually required. 
Thus, we directly apply RBF-based least-squares methods on the original equation. 
However, it is a good idea for a future study to apply the method on the corresponding first order system of equations because
smoothness of the basis functions affects stability of the final system.
On the other hand, the construction of the finite dimensional
subspaces using RBFs is independent of the problem dimension and
an extension to high dimensional problems is straightforward.

RBFs are powerful tools in multi-variable approximation and there exist substantial interest and effort for developing these basis functions.
This approximation is based on unrelated centers for the discretization process while most other methods are relied on underlying meshes.
In the present paper, we restrict ourselves to RBFs that reproduce Sobolev spaces as their native spaces.
We collect some few necessary results on RBFs while the whole theory is extensively presented in \cite{HW3}.

Both kernel-based collocation and Galerkin methods were investigated for solving PDEs.
The  unsymmetric  collocation method
was introduced by Kansa \cite{Kan}, in 1990. The linear system arising from this method may not be solvable in general; see Hon and Schaback \cite{Hon}. By changing the setting, a convergence analysis {was} given by Schaback \cite{SCH7}.
The symmetric collocation method was initially investigated by Wu \cite{ZW3} and Narcowich and Ward \cite{NAR5}.
The analysis of this method was investigated in \cite{Fra1,Fra2,HW3}, and recently in \cite{cheung-ling-schaback:2018}.
In \cite{HW2} an analysis for a meshless Galerkin method for a second order elliptic problem with natural boundary
conditions {was} given and a finite element like convergent estimate {was} obtained. 
A Petrov-Galerkin kernel-based method was given and analyzed in \cite{mirzaei:2018-1}.
Also, a meshless method for numerical solution of PDEs by using
 the Hermite-Birkhoff interpolation with radial basis is presented in see \cite{Z. Wu}.

Despite of numerous theoretical and computational advantages of LSPs,
there has not been a substantial effort devoted to investigating the least-squares
variational kernel-based approaches for solving PDEs; the subject that will be considered in this paper.

 The paper is organized as follows.
In Section \ref{sect-Notation}, some few notations and some auxiliary results are introduced. A short summary of the theory of RBFs approximation with a focus on those basis functions which generate Sobolev spaces is given.
The rest of this section is devoted to introduce the necessary technical framework of continuous and discrete least-squares principles
for numerical solution of second order differential equations.
Section \ref{sect-discretization} has three parts.
In the first part, the approximate solution is defined to be the minimizer of a mesh-dependent least-squares functional
that is a weighted sum of the least-squares residuals of differential equation and boundary conditions.
In the second part, the error analysis of the method is given. The analysis is partly based on an unproven inverse inequality on the boundary that demands a new 
research study. In the last part, the condition number of the final matrix is estimated.
Finally, in Section \ref{sect-numerical} some numerical results are reported to verify the theoretical bounds of the preceding section.

\section{Notations and Auxiliary Results}\label{sect-Notation}
In this paper, $\Omega$ will denote a simply connected bounded region in $\mathbb{R}^d$
with a sufficiently smooth boundary $\partial \Omega$,
and $C$ will be considered a generic positive constant whose meaning and value changes with context.
For $s \ge 0$, we use the standard notation and definition for the Sobolev spaces ${H^s}(\Omega )$ and
${H^s}(\partial \Omega )$ with corresponding inner products denoted by
${(\cdot,\cdot)_{s,\Omega }}$ and ${(\cdot,\cdot)_{s,\partial \Omega }}$
and norms by ${\left\| {\,\cdot\,} \right\|_{s,\Omega }}$ and ${\left\| {\,\cdot\,} \right\|_{s,\partial \Omega }}$,
respectively; see, e.g., \cite{Adam}, for details.
For $s<0$, the spaces ${H^s}(\Omega )$ and ${H^s}(\partial \Omega )$ are identified with the duals of
${H^{-s}}(\Omega )$ and ${H^{-s}}(\partial \Omega )$, respectively. A norm for $f \in H^s(\Omega)$, with $s<0$,  is defined by
$${\| f \|_{  s,\Omega }} = \mathop {\sup }\limits_{u \in {H^{-s}}(\Omega )} \frac{{{{\left( {f,u} \right)}_{0,\Omega }}}}{{{{\left\| u
 \right\|}_{-s,\Omega }}}}.$$
For $s<0$, the norm on $H^s(\partial \Omega)$  can be defined similarly; see \cite{Girault} for more details.

A function $f$ defined on $\Omega$
 is said to be \emph{Lipschitz continuous} if for some
constant $C$, there holds the inequality
\[\left| {f(x) - f(y)} \right| \le C\left\| {x - y} \right\|,\,\,\,\,\,\,\,\,\,\,\,\,\,\,\,\,\,\,\forall x,y \in \Omega .\]
In this formula, $\| {x - y} \|$ denotes the standard Euclidean distance between $x$ and $y$.
More generally, a function $f$ is said to be \emph{H\"{o}lder continuous} with exponent
$\beta \in (0, 1]$ if for some constant $C$,
\[\left| {f(x) - f(y)} \right| \le C\| {x - y} \|^{\beta},\,\,\,\,\,\,\,\,\,\,\,\,\,\,\,\,\,\,\forall x,y \in \Omega .\]
The H\"{o}lder space $ C^{0,\beta}(\overline \Omega)$ is defined to be the subspace of $C(\overline \Omega)$
functions
that are H\"{o}lder continuous with the exponent $\beta$.
For $l \in \mathbb{Z_+} $ and $\beta \in (0, 1]$, we similarly define the H\"{o}lder space
\[{C^{l,\beta }}(\overline \Omega  ) = \left\{ {f \in C^l(\overline \Omega  )\left| {{D^\alpha }f \in {C^{0,\beta }}(\overline \Omega  ),\,\,\,\,\left| \alpha  \right| = l} \right.} \right\},\]
where $\alpha=(\alpha_1,\ldots,\alpha_d)\in \mathbb N_0^d$ is a multi-index, $|\alpha|=\alpha_1+\cdots + \alpha_d$. The
partial derivative operator $D^\alpha$ is defined by
\begin{equation*}\label{eq2-4}
{D^\alpha } = \frac{\partial^{|\alpha|}}{\partial (x^1)^{\alpha_1}\cdots \partial (x^d)^{\alpha_d}}.
\end{equation*}
where $({x^1},\ldots,{x^d})^T\in \mathbb{R}^d$.
\\
\subsection{Approximation by RBFs}\label{sect-rbf}
For a given function space ${H^\tau }\left( \Omega  \right)$ on bounded domain
$\Omega  \subset {\mathbb{R}^d}$,
we define the finite dimensional kernel-based meshless trial spaces ${U_{\Phi ,X}} \subset {H^\tau }\left( \Omega  \right)$ by
$$
{U_{\Phi ,X}} := \mathrm{span}\left\{ {\Phi \left( {\,\cdot\, - {x_j}} \right)\,\,:\,\,{x_j} \in X} \right\},
$$
where $\Phi :{\mathbb{R}^d} \to \mathbb{R}$ is 
a radial basis function and
$$
X = \left\{ {{x_1},\ldots,{x_N}} \right\},
$$
will always be a finite subset of $\Omega$, with the
points all assumed to be distinct. There are two useful quantities associated with X. The first is the mesh norm for $X$ related to
$\Omega$, called \emph{ fill distance}, given by
$$
\,{h_{X,\Omega }}: = \mathop {\sup }\limits_{x \in \Omega } \,\mathop {\min }\limits_{{x_j} \in X} \,\left\| {x - {x_j}} \right\|,
$$
where norm $\left\| {\,\cdot\,} \right\|$ is the Euclidean norm in $\mathbb{R}^d$.
In other words, the largest ball in $\Omega$ that does not contain a data site has radius at most ${h_{X,\Omega }}$.
The second is the \emph{separation radius},
\begin{equation*}\label{eq1_3}
{q_X}: = \frac{1}{2}\mathop {\min }\limits_{{x_j} \ne {x_k}} \,\left\| {{x_j} - {x_k}} \right\|.
\end{equation*}
It is easy to see that if $\Omega$ is connected, we have ${h_{X,\Omega }} \ge {q_X}$.
A sequence of set points $\{X_k\}$
is called \emph{quasi-uniform} if there exists a uniform constant $\delta  > 0$ such that
 ${q_{X_k}} \ge \delta \,{h_{X_k,\Omega }}$ for all $k$.
In particular, the quantity $\rho_X := h_{X,\Omega}/q_X$ is commonly referred as the \emph{mesh ratio}
of $X$.
\begin{Def}\label{def2_2}
A continuous and even function $\Phi :{\mathbb{R}^d} \to \mathbb{R}$ is said to be
positive definite if for all $N \in \mathbb{N}$, all sets of pairwise
distinct centers $X = \left\{ {{x_1},\ldots,{x_N}} \right\}$ in $\mathbb{R}^d$, and all
$\alpha  \in {\mathbb{R}^N} \backslash  \left\{ 0 \right\}$
the quadratic form
$
\sum\nolimits_{j,k = 1}^N {{\alpha _j}{\alpha _k}} \Phi \left( {{x_j} - {x_k}} \right)
$
is strictly positive.
\end{Def}

The RBF interpolant of a continuous functions $u$ on a set $X$ is denoted by ${I_X}u$ and is given by
$$
{I_X}u := \sum\limits_{j = 1}^N {{b _j}\Phi \left( {\cdot - {x_j}} \right)},
$$
where the coefficient vector $b$ is determined by enforcing the interpolation conditions
$I_Xu(x_k) = u(x_k)$ for $k=1,\ldots,N$. If $\Phi$ is a positive definite kernel then the interpolation matrix
$B = (\Phi(x_k-x_j))$ is positive definite and the problem is {uniquely} solvable.

It is known that (see for example \cite{HW3}) a function $\Phi\in L^1(\mathbb R^d)\cap C(\mathbb R^d)$ is positive definite if and only if it is bounded and its Fourier transform is nonnegative and nonvanishing.
Our convention for the Fourier transform of a function $f\in L^1(\mathbb R^d)$ is
\begin{equation*}\label{eq1_8}
\widehat f(\omega): = (2\pi )^{ - d/2}\int_{{\mathbb{R}^d}} f(x){e^{ - i{\omega^T}x}} dx, \quad \omega\in \mathbb R^d.
\end{equation*}
In this paper we will further assume that
$\Phi$ has {an} algebraically decaying Fourier transform. To be more
precise, we assume that
\begin{equation}\label{eq1_12}
{C_{1}}{( {1 + {{\left\| \omega \right\|}^2}} )^{ - \tau }} \le \widehat \Phi (\omega) \le C_2{( {1 + {{\left\| \omega \right\|}^2}} )^{ - \tau }},\quad \omega \in {\mathbb{R}^d},
\end{equation}
where $C_1$ and $C_2$ are constants and $\tau  > d/2$.
By this assumption the \emph{native space}
\begin{equation*}\label{eq1_10}
{\mathcal{N}_\Phi }({\mathbb{R}^d}) := \left\{ {f \in {L^2}({\mathbb{R}^d}) \cap C({\mathbb{R}^d})\; :\;{\widehat f} /\sqrt {\widehat \Phi }  \in {L^2}({\mathbb{R}^d})} \right\},
\end{equation*}
 with the inner product
\begin{equation*}\label{eq1_11}
{(f,g)_{{\mathcal{N}_\Phi }({\mathbb{R}^d})}}: = (2\pi )^{ -d/2}\int_{\mathbb{R}^d} {\frac{{\widehat f(\omega)\overline {\widehat g(\omega)} }}{{\widehat \Phi (\omega)}}} d\omega,
\end{equation*}
is identical with the Sobolev space $H^\tau(\mathbb R^d)$ and their norms are equivalent \cite{HW3}.
Note that the inner product in $H^\tau(\mathbb R^d)$ is defined by
\begin{equation*}\label{eq1_13}
{\left( {f,g} \right)_{\tau ,{\mathbb{R}^d}}} := (2\pi )^{ - d/2}\int_{\mathbb{R}^d} \widehat f(\omega) \overline{\widehat g(\omega)} {(1 + {\left\| \omega \right\|^2})^\tau }d\omega,\quad f,g \in H^\tau (\mathbb{R}^d).
\end{equation*}
If we assume that $\Omega$ has a Lipschitz boundary to ensure the existence of a
continuous extension operator ${E_\Omega }:{H^\tau }(\Omega ) \to {H^\tau }({\mathbb{R}^d})$
then
the native space ${\mathcal{N}_\Phi }(\Omega )$ is norm-equivalent to ${H^\tau }(\Omega )$ \cite{HW3}.

It is well known that RBF interpolants are also the best approximants in the following sense
\begin{equation*}\label{eq1-15}
\mathop {\min }_{v \in {U_{\Phi ,X}}} \,{\| {u - v} \|_{{\mathcal{N}_\Phi }(\Omega )}} = {\left\| {u - {I_X}u} \right\|
_{{\mathcal{N}_\Phi }(\Omega )}}.
\end{equation*}
Hence, if the native space coincides with an appropriate Sobolev
space, the norm of $u - {I_X}u$ can be bounded by the norm of the target function $u$ in Sobolev spaces.
Since the smoothness of $u$ is unknown in general, we have to
look for convergence results where $\Phi$ can be chosen independent of the smoothness
of $u$; i.e., the error estimates include situations in which $u$ does not belong to the native space of the RBF.
In \cite{NAR1,NAR2,NAR3} the Sobolev type error estimates for positive real $\tau$,
for functions inside or outside the native space were derived.
\begin{The} \label{th2_1}
Suppose a positive definite kernel $\Phi$ satisfying (\ref{eq1_12}), with
$\tau  \ge k > d/2$,
 and let a bounded Lipschitz domain
 $\Omega  \subset {\mathbb{R}^d}$ be given. Furthermore, let
$X \subset \Omega$ has mesh norm ${h_{X,\Omega }}$. Then
there exists a function $v^h \in {U_{\Phi ,X}}$, a constant  $C$ independent of $u$ and
${h_{X,\Omega }}$ such that
\begin{equation*}\label{eq1_16}
{\| {u - v^h} \|_{r,\Omega }} \le C\,h_{X,\Omega }^{k - r}{\left\| u \right\|_{k,\Omega }},\quad 0 \le r \le k,
\end{equation*}
and
\begin{equation*}\label{eq1_17}
{\left\| {u - {I_X}u} \right\|_{r,\Omega }} \le C\,h_{X,\Omega }^{k - r}{\left\| u \right\|_{k,\Omega }}, \quad 0 \le r \le k,
\end{equation*}
for all
$u \in {H^k}(\Omega )$.
\end{The}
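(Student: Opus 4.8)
The plan is to reduce the whole statement to the single interpolation estimate (\ref{eq1_17}): since $I_X u \in V_{\Phi,X}$, the choice $v^h = I_X u$ makes (\ref{eq1_16}) an immediate consequence of (\ref{eq1_17}). For (\ref{eq1_17}) I would argue in two regimes. In the top-smoothness regime $k = \tau$ the target lies in the native space and the optimal-recovery property of $I_X$ is available, while in the genuine approximation regime $d/2 < k < \tau$ the function $u$ need not belong to $\mathcal{N}_\Phi(\Omega)$ and a band-limited approximation device is needed to transfer the native-space estimate to rougher data.

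The common engine is a \emph{sampling (zeros) inequality}: if $w \in H^k(\Omega)$ satisfies $w|_X = 0$ and $h_{X,\Omega}$ is small enough, then
$$
\|w\|_{r,\Omega} \le C\, h_{X,\Omega}^{k-r}\, \|w\|_{k,\Omega}, \qquad 0 \le r \le k.
$$
I would prove this by local polynomial reproduction: cover $\Omega$ by finitely overlapping patches satisfying an interior cone condition, approximate $w$ on each patch by a polynomial of degree less than $k$ via the Bramble--Hilbert lemma, and exploit that $w$ vanishes on the $h$-dense set $X$ to control the local polynomial; summing the local $L^2$-estimates produces the factor $h^{k-r}$. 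The hypothesis $k > d/2$ is used here through the Sobolev embedding $H^k(\Omega) \hookrightarrow C(\Omega)$, which makes the pointwise condition $w|_X = 0$ meaningful and the norming-set estimates valid. Applying the sampling inequality to $w = u - I_X u$, which vanishes on $X$ by construction, gives
$$
\|u - I_X u\|_{r,\Omega} \le C\, h_{X,\Omega}^{k-r}\, \|u - I_X u\|_{k,\Omega},
$$
so everything reduces to bounding $\|u - I_X u\|_{k,\Omega}$ by $\|u\|_{k,\Omega}$.

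When $k = \tau$ this last bound is essentially free: $I_X u$ is the minimal $\mathcal{N}_\Phi(\Omega)$-norm interpolant, so $\|I_X u\|_{\mathcal{N}_\Phi} \le \|u\|_{\mathcal{N}_\Phi}$, and the native-space/Sobolev norm equivalence furnished by (\ref{eq1_12}) turns this into $\|u - I_X u\|_{\tau,\Omega} \le C\|u\|_{\tau,\Omega}$. The case $k < \tau$ is the main obstacle, because the minimal-norm property lives in $H^\tau$ but $u$ is only $H^k$. Here I would follow the band-limited strategy of Narcowich--Ward--Wendland: after extending $u$ to $\mathbb{R}^d$, approximate it by an entire function $\sigma$ of exponential type with band $\sim h_{X,\Omega}^{-1}$, and split
$$
u - I_X u = (u - \sigma) - I_X(u - \sigma) + (\sigma - I_X \sigma).
$$
The term $\sigma - I_X\sigma$ is handled by the native-space estimate above together with a Bernstein inequality for band-limited functions, which trades $\|\sigma\|_{\tau,\Omega}$ for $h_{X,\Omega}^{k-\tau}\|\sigma\|_{k,\Omega}$ and exactly compensates the loss; the approximation term $u - \sigma$ is small in $H^r$ by the choice of band; and the remaining interpolation term $I_X(u-\sigma)$ is controlled through the $L^\infty$ smallness of $u - \sigma$ (again using $k > d/2$) and a bound on $I_X$ in terms of its sampled values. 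Balancing the contributions against the band width yields $\|u - I_X u\|_{k,\Omega} \le C\|u\|_{k,\Omega}$, and hence (\ref{eq1_17}).

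The decisive difficulty is precisely this escape to $k < \tau$: the clean variational characterization of the interpolant only operates at the maximal smoothness $k = \tau$, and all lower smoothness must be recovered by the more delicate band-limited/Bernstein analysis, where keeping the interpolation of the non-band-limited remainder under control is the technical crux.
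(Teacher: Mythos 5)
The paper does not actually prove Theorem \ref{th2_1}: it is imported from the literature (Wendland \cite{HW2} for integer $\tau$, Narcowich--Ward--Wendland \cite{NAR1,NAR2,NAR3} for real $\tau$), so there is no in-paper argument to compare against. Your outline is a faithful reconstruction of the proof in those references: the reduction of (\ref{eq1_16}) to (\ref{eq1_17}) via $v^h=I_Xu$, the zeros/sampling inequality (which the paper in fact records separately, later, as Lemma \ref{Lem-Sample}) applied to $u-I_Xu$, the optimal-recovery and norm-equivalence argument at $k=\tau$, and the band-limited/Bernstein device for escaping the native space when $d/2<k<\tau$ are exactly the ingredients of \cite{NAR2,NAR3}. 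You also correctly identify the crux, namely the stability bound $\|u-I_Xu\|_{k,\Omega}\le C\|u\|_{k,\Omega}$ below native-space smoothness. Two caveats, inherited from the way the theorem is quoted rather than from your reasoning: in the regime $k<\tau$ the published estimates carry a power of the mesh ratio $\rho_X=h_{X,\Omega}/q_X$ (it enters through the stability of $I_X$ on the non-band-limited remainder), so the constant is independent of $h_{X,\Omega}$ only for quasi-uniform $X$, a hypothesis the statement suppresses but the paper does impose on $X$ elsewhere; and both the sampling inequality and the band-limited construction require $h_{X,\Omega}$ to be sufficiently small, a restriction present in Lemma \ref{Lem-Sample} but not in the statement of Theorem \ref{th2_1}. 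With those provisos your proposal is correct.
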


\subsection{CLSP for second order PDEs}\label{sect-adn2order}
For a bounded domain $\Omega  \subset {\mathbb{R}^d}$ with boundary $\partial \Omega$, we consider the
 following second order elliptic operator
\begin{align}
{{L}}u\left( x \right) =  - \sum_{i,j = 1}^d {{a_{ij}( x )}\frac{{{\partial ^2}u}}{{\partial {x^i}\partial {x^j}}}} \left( x \right) + \sum_{i = 1}^d {{b_{i}\left( x \right)}\frac{{\partial u}}{{\partial {x^i}}}} \left( x \right) + c\left( x \right)u\left( x \right) &= f\left( x \right),\quad x \in \Omega , \label{eq4-21} \\
{{B}}u\left( x \right) = u\left( x \right) &= g\left( x \right),\quad x \in \partial \Omega, \label{eq4-22}
\end{align}
where $u\in H^2(\Omega)$, $f \in {L^2}\left( \Omega  \right)$ and
$g \in H^{3/2}\left( {\partial \Omega } \right).$

If we assume that $U$, $V$ and $W$ are Hilbert spaces and
problem \eqref{eq4-21}-\eqref{eq4-22} is well-posed so that it
has a unique solution for all smooth data $f$ and $g$ and
there exist positive constants ${C_1}$ and ${C_2}$ such that
\begin{equation}\label{eq2-5}
{C_1}{\left\| {u} \right\|_U} \le {\left\| {{L}{u}} \right\|_V} + {\left\| {{B}{u}} \right\|_W} \le {C_2}{\left\| {u} \right\|_U}.
\end{equation}
This relation is called \emph{energy balance} which is fundamental
to least-squares methods because it defines a proper norm-equivalence
between solution space $U$ and data Space $V \times W$.

In order to achieve high order convergence, the regularity of $u$ needs
to be higher than what is strictly required by the problem itself. Here we assume for a real $k\geq 2$,
\begin{equation*}
U:= H^{k}\left( \Omega  \right) ,\quad V:= H^{k-2}\left( \Omega  \right) ,\quad W:= H^{k-1/2}\left(\partial \Omega  \right).
\end{equation*}

By using an Agmon-Douglis-Nirenberg (ADN) setting \cite{ADN1}, the left inequality in
\eqref{eq2-5} can be proved for the above $L$ and $B$ operators. See \cite{YAR,YAR2} for details and proofs.
Problem \eqref{eq4-21}-\eqref{eq4-22} is well-posed if and only if the boundary operator ${B}$ complements ${L}$
in a proper way. As specified in \cite{ADN1}, this is equivalent to an algebraic condition,
called the \emph{complementing condition}, on
the principal parts of $L$ and $B$. However, we shall not state these conditions here
as they are somewhat complicated and are not needed in the continuation.
But in what follows we assume ${L}$ is uniformly elliptic in the sense of ADN in $\overline \Omega$
 and ${B}$ satisfies the complementing condition.

\begin{Lem}\label{cor4_1}
Let $k \ge 2$ be real and assume $\Omega$ is a bounded domain such that $\,\partial \Omega  \in {C^{k}}.$ Furthermore,
assume that the ${a_{ij}}, {b_{i}},$ and $c$ are in ${C^{k-2}}\left( {\overline \Omega  } \right)$.
If $f \in {H^{k-2}}\left( \Omega  \right)$ and $g \in {H^{k-1/2}}\left( {\partial \Omega } \right)$
then every solution $u \in {H^2}\left( \Omega  \right)$ is indeed in ${H^{k}}\left( \Omega  \right)$.
Also, there exists a constant $C > 0$ independent of $u$, $f$ and $g$ such that for every solution $u\in {H^{k}}\left( \Omega  \right)$ we have
\begin{equation}\label{eq4-23}
\left\| u \right\|_{k,\Omega } \le C\left( \| f \|_{k-2,\Omega } + \| g \|_{k - 1/2,\partial \Omega }\right).
\end{equation}
Moreover, the a priori bound (\ref{eq4-23}) can be also extended to all real values $k<2$.
\end{Lem}

Throughout the paper and in what follows,
whenever we assume $u\in H^k(\Omega)$, for some real $k\geq 2$, is the unique solution of \eqref{eq4-21}-\eqref{eq4-22} (perhaps in a weak sense),
then $\Omega$ is assumed to be a bounded domain in $\mathbb{R}^d$ such that $\partial \Omega  \in {C^k}$. Furthermore,
the functions ${a_{ij}}$, $b_i$ and $c$ in \eqref{eq4-21} are assumed to be of class ${C^{k-2}}\left( {\overline \Omega  } \right)$.

Lemma \ref{cor4_1} yields the inverse of
 mapping $T:{H^{k}}\left( \Omega  \right) \to {H^{k-2}}\left( \Omega  \right) \times H^{k -1/2}\left( {\partial \Omega } \right)$
defined by $Tu = \left( {{{L}}u,u} \right)$ which is continuous for all real $k$.
To extend this a priori estimate to the energy balance, we need the trace theorem that relates the Sobolev norms
of functions on the interior of $\Omega$ with the Sobolev norms of their restrictions to the boundary $\partial \Omega$ \cite{Adam}.
\begin{The}(Trace Theorem)\label{lem4_2}
Assume that $\partial \Omega  \in {C^{\ell,1}}$ for some $\ell\ge 0$ and $1/2 < k \le \ell + 1$. Then, the trace operator
$\pi :{H^k}(\Omega ) \to {H^{k - 1/2}}(\partial \Omega )$,
where
$\pi u: = u\left| {_{\partial \Omega }} \right.$, is bounded. This means there exists a positive constant $C$
 such that for all
$u \in {H^k}\left( \Omega  \right)$
\begin{equation*}\label{eq4-24}
\left\| \pi u \right\|_{k - 1/2,\partial \Omega } \le C{\left\| u \right\|_{k,\Omega }}.
\end{equation*}
\end{The}
By applying Lemma \ref{cor4_1} and Theorem \ref{eq4-24}
we simply have the following theorem.
\begin{The}\label{th4_3}
For real $q \ge 0$, let $\Omega$ be a bounded domain such that $\partial \Omega  \in C^{q+2}$. Furthermore,
assume that the coefficients of ${L}$ are of class ${C^q}\left( {\overline \Omega  } \right)$. Then
the mapping $T:{H^{q+2}}\left( \Omega  \right) \to {H^{q}}\left( \Omega  \right) \times {H^{q +3/2}}\left( {\partial \Omega } \right)$
defined by $Tu = \left( {{{L}}u,u} \right)$ is a homeomorphism, and the norms
${\left\| {\,\cdot\,} \right\|_{q + 2,\Omega }}$
and
${\left\| {\,{{L}}\cdot\,} \right\|_{q,\Omega }} + {\left\| {\,\cdot\,} \right\|_{q + 3/2,\partial \Omega }}$
are equivalent; i.e., there exists a constant $C > 0$ independent of $u$ such that
\begin{equation}\label{eq4-25}
{C^{ - 1}}{\left\| u \right\|_{q + 2,\Omega }} \le {\left\| {{{L}}u} \right\|_{q,\Omega }} + {\left\| u \right\|_{q +3/2,\partial \Omega }} \le {C}{\left\| u \right\|_{q + 2,\Omega }}.
\end{equation}
\end{The}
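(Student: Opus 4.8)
The plan is to read off the left-hand inequality of (\ref{eq4-25}) directly from the \emph{a priori} estimate already established in Corollary \ref{cor4_1}, to obtain the right-hand inequality from the boundedness of $L$ together with the trace theorem, and then to upgrade the resulting two-sided estimate to the homeomorphism statement using the solvability supplied by assumption A1.

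First I would establish the left inequality. For $u \in H^{q+2}(\Omega)$ put $f := Lu$ and $g := \pi u = u|_{\partial\Omega}$. Since the coefficients of $L$ lie in $C^{q}(\overline\Omega)$ we have $f \in H^{q}(\Omega)$, and by Lemma \ref{lem4_2} we have $g \in H^{q+3/2}(\partial\Omega)$; thus $u$ solves (\ref{eq4-21})--(\ref{eq4-22}) with these data, and the bound (\ref{eq4-23}) of Corollary \ref{cor4_1} reads $C^{-1}\|u\|_{q+2,\Omega} \le \|Lu\|_{q,\Omega} + \|u\|_{q+3/2,\partial\Omega}$, which is precisely the first inequality of (\ref{eq4-25}). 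This is the deep direction, but here it is inherited from ADN theory via Corollary \ref{cor4_1} and therefore costs nothing.

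Next I would prove the right inequality in the split form $\|Lu\|_{q,\Omega} + \|\pi u\|_{q+3/2,\partial\Omega} \le C\|u\|_{q+2,\Omega}$ and bound each summand. For the interior term, $Lu$ is a sum of the pieces $a_{ij}\,\partial^2 u/\partial x^i\partial x^j$, $b_i\,\partial u/\partial x^i$ and $cu$; the differentiations map $H^{q+2}(\Omega)$ continuously into $H^{q}(\Omega)$, $H^{q+1}(\Omega)$ and $H^{q+2}(\Omega)$ respectively, and multiplication by the $C^{q}(\overline\Omega)$ coefficients is a bounded operation on $H^{q}(\Omega)$, so that $\|Lu\|_{q,\Omega} \le C\|u\|_{q+2,\Omega}$. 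For the boundary term I would apply Lemma \ref{lem4_2} with $s = q+2$: since $q \ge 0$ we have $1/2 < q+2$, and the hypothesis $\partial\Omega \in C^{q+2} \subset C^{q+1,1}$ permits the choice $k = q+1$, so that $s = q+2 = k+1$ is admissible and $\|\pi u\|_{q+3/2,\partial\Omega} \le C\|u\|_{q+2,\Omega}$. Adding the two bounds gives the right inequality, completing the norm equivalence (\ref{eq4-25}).

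Finally, the two estimates of (\ref{eq4-25}) say exactly that $T$ is bounded (upper bound) and bounded below (lower bound), hence a continuous injection with closed range and continuous inverse on that range. To see that $T$ maps onto $H^{q}(\Omega)\times H^{q+3/2}(\partial\Omega)$, and is therefore a homeomorphism, I would invoke Corollary \ref{cor4_1} together with A1: for arbitrary data $(f,g)$ in the target space, A1 furnishes a solution $u$, whose regularity is lifted to $H^{q+2}(\Omega)$ by the first conclusion of Corollary \ref{cor4_1}, giving $Tu=(f,g)$; the remark that (\ref{eq4-23}) extends to all real $q$ ensures continuity of $T^{-1}$ for every $q$. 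The one genuinely delicate point I anticipate (the hard \emph{a priori} direction being already supplied) is the boundedness of $L:H^{q+2}(\Omega)\to H^{q}(\Omega)$ when $q$ is not an integer, which rests on $C^{q}(\overline\Omega)$ functions acting as pointwise multipliers on the fractional-order space $H^{q}(\Omega)$; for integer $q$ this is just the Leibniz rule, whereas for real $q$ it requires a multiplication estimate in fractional Sobolev spaces. A secondary point requiring care is matching the regularity hypotheses of the trace theorem to the assumed boundary smoothness $\partial\Omega \in C^{q+2}$.
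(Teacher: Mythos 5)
Your proposal is correct and follows essentially the same route as the paper: the left inequality is read off from the a priori estimate of Corollary \ref{cor4_1}, the right inequality from the boundedness of $L$ together with the Trace Theorem (Lemma \ref{lem4_2}), and bijectivity of $T$ from assumption A1 plus the regularity lift in Corollary \ref{cor4_1}. The paper's own proof is a three-line version of exactly this argument; your write-up merely supplies the details (and correctly flags the multiplier issue for fractional $q$, which the paper leaves implicit).
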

This relation 
defines a proper norm-equivalence
between solution space ${H^{q + 2}}\left( \Omega  \right)$ and data Space 
${H^q}\left( \Omega  \right) \times {H^{q + 3/2}}\left( {\partial \Omega } \right)$. Moreover,
the energy balance (\ref{eq4-25}) allows to define a well-posed \emph{continuous least-squares principle} (CLSP)
 for (\ref{eq4-21})-(\ref{eq4-22}) by energy functional
\begin{equation}\label{eq4-26}
J_{q}\left( {u;f,g} \right): = \frac{1}{2}\left( {\left\| {{{L}}u - f\,} \right\|_{q,\Omega }^2 + \left\| {u - g} \right\|_{q + 3/2,\partial \Omega }^2} \right).
\end{equation}
The corresponding CLSP is given by the pair
 $\left\{ {{H^{q + 2}}\left( \Omega  \right),J_q} \right\}$,
which corresponds to an unconstrained minimization problems
\begin{equation}\label{eq4-27}
\mathop {\min }\limits_{u \in {H^{q + 2}}\left( \Omega  \right)} \,\,\,{J_q}\left( {u;f,g} \right).
\end{equation}
From Theorem \ref{th4_3} the least-squares functional $J_q(\cdot;0,0)$ 
 defines a norm-equivalent property 
 for $\|\cdot\|_{q+2,\Omega}$
,i.e. , obviously one sees that the functional $J_q\left( {\cdot\,;0,0} \right)$ is equivalent to
${\left\| \cdot \right\|_{q+2,\Omega}}$ in the sense that
\begin{equation}\label{eq2-8}
\frac{1}{4}C_1^2\left\| u \right\|_{q+2,\Omega}^2 \le J_q\left( {u;0,0} \right) \le \frac{1}{2}C_2^2\left\| u \right\|_{q+2,\Omega}^2.
\end{equation}
Therefore, according to \cite[Theorem 2.5]{PB1}, 
{for all real values $q\geq0$,}
problem (\ref{eq4-27})
has a unique minimizer $u \in {H^{q + 2}}\left( \Omega  \right)$ that depends continuously on the data
$(f,g) \in {H^q}\left( \Omega  \right) \times {H^{q + 3/2}}\left( {\partial \Omega } \right)$.
Moreover, it is not difficult to see that a minimizer of \eqref{eq4-21}-\eqref{eq4-22} solves (\ref{eq4-27}) and conversely; i.e.,
the problems \eqref{eq4-21}-\eqref{eq4-22} and 
(\ref{eq4-27}) are equivalent in the sense that $u \in {H^{q + 2}}\left( \Omega  \right)$ is a solution of (\ref{eq4-27}) if and only if it is
also a solution, perhaps in a generalized sense, of \eqref{eq4-21}-\eqref{eq4-22}.

The Euler-Lagrange equation for (\ref{eq4-27}) is then given by the variational problem:
\begin{equation*}\label{eq4-30}
\mathrm{seek} ~~u \in {H^{q+2}}\left( \Omega  \right)~
\mathrm{such~that} ~~Q_q\left( {u,v} \right) = F_q\left( v \right),\,\,\,\,\,\,\,\,\,\forall v \in {H^{q+2}}\left( {\Omega } \right),
\end{equation*}
where
\begin{equation*}\label{eq4-31}
Q_q\left( {u,v} \right) = {\left( {{{L}}u,{{L}}v} \right)_{q,\Omega }} + {\left( {u,v} \right)_{q+3/2,\partial \Omega }} ,
~~~~~~\mbox{and}~~~~~~
F_q\left( v \right) = {\left( {{{L}}v,f} \right)_{q,\Omega }} + {\left( {v,g} \right)_{q+3/2,\partial \Omega }} .
\end{equation*}
We notice that the \emph{energy inner product} associated with
 $\left\{ {{H^{q + 2}}\left( \Omega  \right),J_q} \right\}$ is given by
${((\cdot,\cdot))_q}:{H^{q + 2}}(\Omega ) \times {H^{q + 2}}(\Omega ) \to \mathbb{R}$, where
$$
{((u,v))_q} := Q_q (u,v),
$$
and \emph{energy norm} is defined by
$
{|||u||{|_q}} := ((u,u))_q^{1/2} = \left[2 J_q(u) \right]^{1/2}.
$
The norm-equivalence property
\begin{equation}\label{eq4-35}
C_1\left\| u \right\|_{q + 2,\Omega } \le |||u||{|_q} \le C_2{\left\| u \right\|_{q + 2,\Omega }},
\end{equation}
holds by (\ref{eq4-25}). The special case $q = 0$ in (\ref{eq4-26}) gives rise to the CLSP
\begin{equation*}\label{eq4-28}
J_0\left( u;f,g \right) = \frac{1}{2}\left( \| Lu - f \|_{0,\Omega }^2 + \| u - g \|_{3/2,\partial \Omega }^2 \right),
\end{equation*}
where its associated energy balance for all $u \in {H^2}(\Omega )$ is
\begin{equation}\label{eq4-321}
C_1\left\| u \right\|_{2,\Omega } \le {\left\| {{{L}}u} \right\|_{0,\Omega }} + {\left\| u \right\|_{3/2,\partial \Omega }} \le C{\left\| u \right\|_{2, \Omega }}.
\end{equation}

In what follows we may write $J_q\left( {u} \right)$ instead of $J_q\left( {u;0,0} \right)$,
$J_q$ instead of $J_q(\cdot;0,0)$ and $Q_q$ instead of $Q_q(\cdot,\cdot)$.

A least-squares discretization can be defined by choosing a family of finite subspaces ${U^h} \subset H^{q+2}(\Omega)$
parameterized by $h$ tending to zero and then restricting the unconstrained minimization problem (\ref{eq4-27}) to the subspaces.
Thus, the  approximation ${u^h} \in {U^h}$ to the solution $u \in H^{q+2}(\Omega)$ of (\ref{eq4-21})-(\ref{eq4-22})
is the solution of the following problem
\begin{equation}\label{eq2-12}
\mbox{seek ${u^h} \in {U^h}$ such that~~$J_q\left( {{u^h};f,g} \right) \le J_q\left( {{v^h};f,g} \right),\,\,\,\,\,\,\,\,\,\forall {u^h} \in U^h$}.
\end{equation}
This process leads to a discrete variational form given by
\begin{equation}\label{eq2-13}
\mbox{seek ${u^h} \in {U^h}$ such that~~$Q_q\left( {{u^h},{v^h}} \right) = F_q\left( {{v^h}} \right),\,\,\,\,\,\,\,\,\,\forall {v^h} \in U^h$}.
\end{equation}
If we choose a basis  $\left\{ {{\phi _j}} \right\}_{j = 1}^N$ and assume ${u^h} = \sum\nolimits_{j = 1}^N {{c_j}{\phi _j}} $
for some constants $\left\{ {{c_j}} \right\}_{j = 1}^N$, then the discretized problem (\ref{eq2-13}) is equivalent to the linear system
\begin{equation}\label{Ax=b}
Ac = b
\end{equation}
where $A$ is a symmetric matrix with entries ${a_{ij}} = Q_q\left( {{\phi _i},{\phi _j}} \right)$ and
 $b$ is a $N$ vector with ${b_i} = F_q\left( {{\phi _i}} \right)$ for all $i,j = 1,\ldots,N$.


Note that, in the above setting, one does not assumed that ${L}$ is positive definite and self-adjoint,
while in Rayleigh-Ritz setting does. However, not only LSP preserves  all attractive
features of a Rayleigh-Ritz setting but also it does not have some Rayleigh-Ritz restrictions. More precisely,
the CLSP $\{ {H^{q+2}(\Omega),J_q} \}$ defines an external Rayleigh-Ritz principle for (\ref{eq4-21})-(\ref{eq4-22}).

The pair $\{U^h,J_q\}$ is called \emph{discrete least-squares principle} (DLSP) where
${U^h} \subset H^{q+2}(\Omega)$ and $J_q$ is given by (\ref{eq4-26}). Although
the CLSP  describes a mathematically well-posed variational setting, its associated DLSP $\{ {{U^h},J_q} \}$ may
describe an algorithmically infeasible setting. For instance, the least-squares functional may contain inner products in
fractional-order Sobolev spaces on the boundary that are inconvenient for actual implementations.
Practical issues may force us to abandon the DLSP setting
described above and consider instead another pair for DLSP, denoted by $\{ {{U^h},J^h} \}$, where
$$
J^h(u) = \frac{1}{2}\big(h^{-t}\|Lu\|_{\tilde V}^2 + h^{-s}\|Bu\|_{\tilde W}^2\big)
$$
with proper (computationally feasible) Sobolev spaces  $\tilde V$ and $\tilde W$ and nonnegative powers $t$ and $s$ \cite{AZIZ,PB2,PB3,PB5}. The pair $\{ {{U^h},J^h} \}$ is called the {\em data-weighted DLSP}.
Also, a weighted least-squares strong-form based on RBF collocation is given in \cite{cheung-ling-schaback:2018}.

\section{RBFs discretization and error estimation}\label{sect-discretization}

Up to here we are given a least-squares functional which
is equivalent to a combination of Sobolev norms, but these norms might be inconvenient from
the computational point of view. To circumvent this flaw, when this functional is restricted to a finite subspace, we can use the
fact that all norms on a finite-dimensional space are equivalent. Thus, essentially all
norms can be replaced by ${L^2}$-norms weighted by some respective equivalence constants.
In this section we try to introduce a mesh-dependent 
least-squares functional
by using RBFs
where the residual of each equation is measured in the ${L^2}$-norm multiplied by
a weight determined by the equation index and the mesh parameter $h$.
As some earlier work for weighted least-squares methods 
we refer the reader to \cite{AZIZ,PB5}.
\\
\subsection{Weighted discretization of CLSP by RBFs}
Recall the kernel $\Phi$ satisfying \eqref{eq1_12} for
$\tau\ge q+2 >d/2$ to form the data dependent trial space $U_{\Phi,X}$
for a quasi-uniform set $X$. Throughout the paper, $\tau$ (the smoothness index of $\Phi$) satisfies $\tau>d/2$ and it is fixed.
Assuming $h=h_{X,\Omega}$, we define the convex data-weighted functional
\begin{equation}\label{eq5-1}
J^h(u;f,g) := \frac{1}{2}\left(\| Lu - f \|_{0,\Omega }^2 + h^{- 3}\|u - g\|_{0,\partial \Omega }^2 \right),\quad u\in {H^{q + 2}}(\Omega ),
\end{equation}
for $q\geq 0$.
The corresponding
data-weighted DLSP $\{ {U_{\Phi ,X},{J^h}}\}$ then leads to the unconstrained minimization problem
\begin{equation}\label{eq5-2}
\mbox{seek}\; {u^h} \in U_{\Phi ,X} \; \mbox{such that}\; J^h({u^h};f,g) \le J^h({v^h};f,g)\quad \forall \,{v^h} \in U_{\Phi ,X}.
\end{equation}
The Euler-Lagrange equation for \eqref{eq5-2} is given by the variational problem
\begin{equation}\label{eq5-3}
\mbox{seek}\;  {u^h} \in U_{\Phi ,X}\; \mbox{such that}\; Q^h(u^h,v^h) = F^h(v^h)\quad \forall \,{v^h} \in U_{\Phi ,X},
\end{equation}
where
\begin{equation*}\label{eq5-4}
Q^h(u,v) = {({{L}}u,{{L}}v)_{0,\Omega }} + {h^{ - 3}}{(u,v)_{0,\partial \Omega }},
\end{equation*}
and
\begin{equation*}\label{eq5-5}
F^h(v) = {({{L}}v,f)_{0,\Omega }} + {h^{  - 3}}{(v,g)_{0,\partial \Omega }}.
\end{equation*}
The bilinear form $Q^h(\cdot,\cdot)$ defines an inner product
$
((\,\cdot\,,\,\cdot\,)):U_{\Phi ,X} \times U_{\Phi ,X} \to \mathbb{R}
$
by
$$
(({u},{v})) := Q^h({u},{v})
$$
which is called the \emph{data-weighted discrete energy inner product}.
The \emph{data-weighted discrete energy norm} is then defined via
$|||{u}||| := \sqrt {(({u},{u}))}  = \sqrt {2{J^h}({u})}$.
Moreover, the discrete energy inner product and norm can be extended to all functions in $H^{q+2}(\Omega)$.

If we define $\phi_j = \Phi(\cdot-x_j)$ and $A_{kj} = Q^h(\phi_k,\phi_j)$ and $b_k = F^h(\phi_k)$ then the final linear system $Ac = b$
gives the solution vector $c$ for $u^h = \sum_{j=1}^N c_j \Phi(\cdot-x_j)$.

\subsection{Error Estimates}\label{sect-error-estimation}
First of all, 
we are interested in finding elements from $U_{\Phi,X} \subset H^{q+2}(\Omega)$ which are closest
to $u\in H^{q+2}(\Omega)$ .
 More precisely, we are interested in the minimization problem
\begin{equation*}\label{eq-inf1}
\mathop {\inf }\limits_{{v} \in U_{\Phi ,X}} |||u - {v}|||.
\end{equation*}
Since $U_{\Phi,X}$ is convex and $J^h$ is a strictly convex functional then from
\cite[Theorem 38.C]{Zeidler}, $J^h$ has at most one minimum on $U_{\Phi,X}$.
On the other hand, (\ref{eq5-1}) guarantees that
the discrete energy norm can be extended to all smooth functions $u \in {H^{q + 2}}(\Omega )$ for $q \ge 0$, and 
since $U_{\Phi,X}\subset H^{q+2}(\Omega)$, we have
\begin{equation*}\label{eq5-101}
((u,{v^h})) = F^h({v^h}),\quad \forall {v^h} \in U_{\Phi ,X}.
\end{equation*}
From \eqref{eq5-3} we also have $((u^h,{v^h})) = F^h({v^h})$ for all ${v^h} \in U_{\Phi ,X}$.
Subtraction gives
\begin{equation}\label{eq5-11}
((u - {u^h},{v^h})) = 0,\quad \forall {v^h} \in U_{\Phi ,X}.
\end{equation}
Therefore, for all ${v^h} \in U_{\Phi ,X}$ we can write
\begin{align*}
{|||u - {u^h}|||^2} &= ((u - {u^h},u - {u^h}))\, \\
&=((u - {u^h},u - {v^h} + {v^h} - {u^h}))\\
& = ((u - {u^h},u - {v^h}))\\
& \le (|||u - {u^h}|||)(|||u - {v^h}|||).
\end{align*}
This leads that the minimizer of the data-dependent DLSP $\{ {U_{\Phi ,X},{J^h}(\,\cdot\,)} \}$
is the best approximation 
of the minimizer of CLSP $\left\{ {{H^{q + 2}}(\Omega ),{J_q}(\,\cdot\,)} \right\}$ 
out of subspace $U_{\Phi ,X}$ in the discrete energy norm.
\begin{The}\label{th4-7}
Let $q\ge 0$ be given and
$u \in {H^{ q+2}}(\Omega )$ be the unique solution of (\ref{eq4-21})-(\ref{eq4-22}).
Then, $\{ {U_{\Phi ,X},{J^h}(\,\cdot\,)} \}$ has at most one minimizer ${u^h} \in U_{\Phi ,X}$. Also the
minimizer ${u^h} \in U_{\Phi ,X}$ is the orthogonal projection of $u$ with respect to the discrete energy norm; i.e.,
\begin{equation}\label{eq5-10}
\mathop {\inf }\limits_{{v^h} \in U_{\Phi ,X}} |||u - {v^h}||| = |||u - {u^h}|||.
\end{equation}
\end{The}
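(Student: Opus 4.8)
The plan is to reduce the minimization of $J^h$ to a best-approximation problem in the finite-dimensional inner-product space $(V_{\Phi ,X}^q,((\cdot,\cdot)))$ and then invoke the orthogonal projection theorem; the crucial observation is a Pythagoras-type identity that is available precisely because $u$ is the \emph{exact} solution. First I would note that since $u\in H^{q+2}(\Omega)$ solves (\ref{eq4-21})-(\ref{eq4-22}), we have $f={{L}}u$ in $L^2(\Omega)$ and $g=u$ (in the trace sense) in $L^2(\partial\Omega)$. The discrete bilinear form $Q^h$ of (\ref{eq5-4}) involves only the $L^2(\Omega)$-pairing of the ${{L}}$-images and the $L^2(\partial\Omega)$-pairing of boundary traces; since ${{L}}$ has $C^q(\overline\Omega)$ coefficients and $q\ge 0$, both pairings are finite for any function in $H^2(\Omega)$. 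Hence $Q^h$ extends unambiguously from $V_{\Phi ,X}^q\times V_{\Phi ,X}^q$ to the larger product space containing $u$, so that $|||u-v^h|||$ is meaningful for every $v^h\in V_{\Phi ,X}^q$.

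Second, substituting $f={{L}}u$ and $g=u$ into the discrete functional (\ref{eq5-1}) gives, for every $v^h\in V_{\Phi ,X}^q$,
\begin{equation*}
J^h(v^h;f,g)=\frac{1}{2}\left( \| {{L}}(v^h-u) \|_{0,\Omega}^2 + h^{-3}\| v^h-u \|_{0,\partial\Omega}^2 \right)=\frac{1}{2}\,Q^h(u-v^h,u-v^h)=\frac{1}{2}\,|||u-v^h|||^2.
\end{equation*}
Therefore minimizing $J^h(\cdot;f,g)$ over $V_{\Phi ,X}^q$ is exactly minimizing $|||u-v^h|||$ over $V_{\Phi ,X}^q$, i.e.\ a best-approximation problem for $u$ in the discrete energy norm.

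Third, by Theorem \ref{th4-5} the form $Q^h$ is strictly positive definite on the finite-dimensional space $V_{\Phi ,X}^q$, so $((\cdot,\cdot))=Q^h(\cdot,\cdot)$ turns $V_{\Phi ,X}^q$ into a finite-dimensional inner-product space. The map $v^h\mapsto |||u-v^h|||^2$ is then a strictly convex, coercive quadratic on this space and hence attains a unique minimizer $u^h$. The projection theorem characterizes $u^h$ by the orthogonality relation $((u-u^h,v^h))=0$ for all $v^h\in V_{\Phi ,X}^q$, which is precisely the asserted identity $\inf_{v^h}|||u-v^h|||=|||u-u^h|||$. Equivalently, one may argue directly from the Euler--Lagrange equation (\ref{eq5-3}): because $f={{L}}u$ and $g=u$, the right-hand side of (\ref{eq5-5}) satisfies $F^h(v^h)=Q^h(u,v^h)$, so (\ref{eq5-3}) reads $Q^h(u-u^h,v^h)=0$ for all $v^h$, recovering the same orthogonality.

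There is no deep obstacle here; the statement is essentially an instance of the projection theorem once the Pythagoras identity is in place. The only points requiring care are (i) extending the discrete energy inner product beyond the trial space so that $|||u-v^h|||$ is well-defined, which is handled by the purely $L^2$-structure of $Q^h$, and (ii) the strict positive-definiteness of $Q^h$ on $V_{\Phi ,X}^q$, guaranteed by Theorem \ref{th4-5}, which is exactly what makes the orthogonal projection well-defined and the minimizer unique.
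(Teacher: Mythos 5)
Your proposal is correct and follows essentially the same route as the paper: both arguments hinge on the observation that, because $u$ is the exact solution, $F^h(v^h)=((u,v^h))$, which yields the Galerkin orthogonality $((u-u^h,v^h))=0$ and hence the best-approximation property. Your direct substitution $J^h(v^h;f,g)=\tfrac12|||u-v^h|||^2$ is a slightly more streamlined packaging of the paper's Lax--Milgram plus Cauchy--Schwarz argument, but it is the same underlying mechanism.
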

%
Note that the orthogonality (\ref{eq5-11}) yields the Pythagorean law
${{|||u - {u^h}|||} ^2} + {{|||{u^h}|||} ^2} = { {|||u|||} ^2}$
giving immediately the stability bounds
$|||u - {u^h}||| \le |||u|||$
and $|||{u^h}||| \le |||u|||$ in the discrete energy norm.

We give the error analysis of the method in two parts. In the first part some segment of the error bound can be obtained
from the analysis of least squares methods given in \cite{AZIZ}.
The approximation space $U_{\Phi,X}$ should possess an optimality property with respect to some pairs $(r,k)$.
This property is addressed in the following lemma
which is a direct consequence of Theorem \ref{th2_1} and a result on approximation in scales of Banach spaces \cite{Br3}.
\begin{Lem}\label{lem*2-1}
Under the assumptions on $X$, $\Omega$, $\Phi$, and ${h}<1$ made in Theorem \ref{th2_1}, with
$\tau  \ge k >d/2$,
for all $u \in {H^k }(\Omega )$ 
there exist a function
$v^h \in {U_{\Phi ,X}}$ and a constant $C > 0$ independent of $h$ and $u$ such that for all $0 \le r \le k$
\begin{equation}\label{eq1_188}
\mathop {\inf }_{v^h \in {U_{\Phi ,X}}} \,\sum_{i = 0}^r {h^i{{\| {u - v^h} \|}_{i,\Omega }}}  \le C h^k{\left\| u \right\|_{k,\Omega }}.
\end{equation}
\end{Lem}
\begin{Lem}\label{lem4-8}
Assume $u \in {H^k}(\Omega )$ is given as the unique minimizer of
$\left\{ {{H^{q + 2}}(\Omega ),{J_q}(\,\cdot\,)} \right\}$ and ${u^h} \in U_{\Phi ,X}$ indicates the unique minimizer of
$\{ {U_{\Phi ,X},{J^h}(\,\cdot\,)} \}$ for some $q \ge 0$.
Then, there exists a constant $C>0$ such that for all $s$ with
$\tau  \ge k \ge s \ge q+2> d/2$
 we have
\begin{equation*}\label{eq5-12}
|||u - {u^h}||| \le C\,{h^{s  - 2}}\,{\left\| u \right\|_{s,\Omega }}.
\end{equation*}
\end{Lem}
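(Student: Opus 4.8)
The plan is to reduce the estimate to a single good trial function by exploiting the Galerkin-type orthogonality already established in Theorem \ref{th4-7}: since $u^h$ is the orthogonal projection of $u$ onto $V_{\Phi,X}^q$ in the discrete energy norm, we have $|||u-u^h||| = \inf_{v^h\in V_{\Phi,X}^q}|||u-v^h|||$, so it suffices to exhibit one $v^h\in V_{\Phi,X}^q$ for which $|||u-v^h|||\le C\,h^{s-2}\|u\|_{s,\Omega}$. First I would apply Lemma \ref{lem*2-1} with smoothness index $s$ (admissible since $\tau\ge s>d/2$) and $r=2$ to obtain a single approximant $v^h\in V_{\Phi,X}=V_{\Phi,X}^q$ satisfying $\sum_{i=0}^{2}h^i\|u-v^h\|_{i,\Omega}\le C\,h^s\|u\|_{s,\Omega}$, and hence the simultaneous estimates $\|u-v^h\|_{i,\Omega}\le C\,h^{s-i}\|u\|_{s,\Omega}$ for $i=0,1,2$. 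This is exactly where the hypothesis $s\ge q+2\ge 2$ is needed, so that the derivative order $2$ does not exceed the available smoothness $s$.

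Next I would expand the discrete energy norm directly from its definition,
\begin{equation*}
|||u-v^h|||^2 = \|L(u-v^h)\|_{0,\Omega}^2 + h^{-3}\|u-v^h\|_{0,\partial\Omega}^2,
\end{equation*}
and treat the two contributions separately. For the interior residual, boundedness of the second-order operator $L$ with $C^q(\overline\Omega)$ coefficients gives $\|L(u-v^h)\|_{0,\Omega}\le C\|u-v^h\|_{2,\Omega}\le C\,h^{s-2}\|u\|_{s,\Omega}$ from the $i=2$ estimate above, which already matches the target rate.

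The hard part will be the boundary term, precisely because of its large weight $h^{-3}$. A crude trace bound of the form $\|u-v^h\|_{0,\partial\Omega}\le C\|u-v^h\|_{1,\Omega}\le C\,h^{s-1}\|u\|_{s,\Omega}$ produces only $h^{-3/2}\|u-v^h\|_{0,\partial\Omega}\le C\,h^{s-5/2}\|u\|_{s,\Omega}$, which is half a power of $h$ short. To recover the missing half power I would instead apply the $\varepsilon$-weighted interpolation-trace inequality of Lemma \ref{lem4-4} to $w=u-v^h$ with the balanced choice $\varepsilon=h^{1/2}$,
\begin{equation*}
\|u-v^h\|_{0,\partial\Omega}\le C\big(h^{-1/2}\|u-v^h\|_{0,\Omega}+h^{1/2}\|u-v^h\|_{1,\Omega}\big)\le C\big(h^{-1/2}h^{s}+h^{1/2}h^{s-1}\big)\|u\|_{s,\Omega}=C\,h^{s-1/2}\|u\|_{s,\Omega},
\end{equation*}
so that after multiplication by $h^{-3/2}$ we get $h^{-3/2}\|u-v^h\|_{0,\partial\Omega}\le C\,h^{s-2}\|u\|_{s,\Omega}$, equidistributing the $h^{-3}$ weight against the interior and boundary approximation errors.

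Combining the two estimates yields $|||u-v^h|||^2\le C\,h^{2(s-2)}\|u\|_{s,\Omega}^2$, hence $|||u-v^h|||\le C\,h^{s-2}\|u\|_{s,\Omega}$, and the orthogonal-projection identity from Theorem \ref{th4-7} then gives $|||u-u^h|||\le|||u-v^h|||\le C\,h^{s-2}\|u\|_{s,\Omega}$, completing the argument. The essential technical point is the sharp choice $\varepsilon=h^{1/2}$ in Lemma \ref{lem4-4}: without it the heavy $h^{-3}$ boundary weight degrades the rate, and it is this optimization that keeps the boundary contribution in step with the interior one.
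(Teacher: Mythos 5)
Your proposal is correct and follows essentially the same route as the paper's proof: reduce to a single trial function via the best-approximation property of Theorem \ref{th4-7}, bound $\|L(u-v^h)\|_{0,\Omega}$ by $\|u-v^h\|_{2,\Omega}$, handle the $h^{-3}$-weighted boundary term with Lemma \ref{lem4-4} at $\varepsilon=h^{1/2}$, and invoke Lemma \ref{lem*2-1} for the simultaneous approximation rates. The only cosmetic difference is that the paper keeps the infimum of the weighted sum $\sum_{i=0}^{2}h^{i}\|u-v^h\|_{i,\Omega}$ intact and applies Lemma \ref{lem*2-1} to it directly, whereas you first extract the individual estimates $\|u-v^h\|_{i,\Omega}\le C h^{s-i}\|u\|_{s,\Omega}$; the content is the same.
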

\begin{proof}
 Let ${v^h} \in U_{\Phi ,X}$.  Using the definition of $|||\,\cdot\,|||$ we have
\begin{align*}
|||u - {v^h}||| &= {\left( {{}\| {{{L}}(u - {v^h})} \|_{0,\Omega }^2 + {h^{  - 3}}\| {u - {v^h}} \|_{0,\partial \Omega }^2} \right)^{1/2}}\\
& \le C\left( {{{\| {u - {v^h}} \|}_{2,\Omega }} + {h^{  - 3/2}}{{\| {u - {v^h}} \|}_{0,\partial \Omega }}} \right)\\
& \le C\left( {{{\| {u - {v^h}} \|}_{2,\Omega }} + {h^{ - 1}}{{\| {u - {v^h}} \|}_{1,\Omega }} + {h^{ - 2}}{{\| {u - {v^h}} \|}_{0,\Omega }}} \right),
\end{align*}
where in the last inequality , for $0 < h \le 1$ the inequality (see \cite{Br4,hangelbroek-et-al:2017})
\begin{equation}\label{L2boundH1L2}
\|v\|_{0,\partial\Omega}\leqslant C(\eta^{-1}\|v\|_{0,\Omega}+\eta \|v\|_{1,\Omega})
\end{equation}
 is used for ${\left\| {u - {v^h}} \right\|_{0,\partial \Omega }}$ with $\eta  = {h^{1/2}}$. Therefore,
$$
\mathop {\inf }_{{v^h} \in U_{\Phi ,X}} |||u - {v^h}||| \le C\,{h^{ - 2}}\,\mathop {\inf }_{{v^h} \in U_{\Phi ,X}} \,
\sum_{i = 0}^2 {{h^i}\,{{\| {u - {v^h}} \|}_{i,\Omega }}} \,.
$$
Using Lemma \ref{lem*2-1} and
(\ref{eq5-10}),
the desired bound is obtained.
\end{proof}
According to Lemma \ref{lem*2-1}, $U_{\Phi,X}$ {\em approximates optimally} with respect to $(r,k)$ for $u\in H^k(\Omega)$ and $0\leqslant r\leqslant k$ in the sense of
\cite{Br3,AZIZ}. Thus, we may do some modifications to the statement and the proof of \cite[Theorem 4.1]{AZIZ} to tune the following result for the kernel-based least-squares method.
\begin{The}\label{th4-9}
Assume $u \in {H^k}(\Omega )$ is given as the unique minimizer of
$\left\{ {{H^{q + 2}}(\Omega ),{J_q}(\,\cdot\,)} \right\}$ and ${u^h} \in U_{\Phi ,X}$ indicates the unique minimizer of
$\{ {U_{\Phi ,X},{J^h}(\,\cdot\,)} \}$ for some $q \ge 0$.
 There exists a constant $C>0$ such that for all $s$ with
$\tau  \ge k \ge s \ge q+2> d/2$ we have
$$
\begin{array}{lll}
\| {{{L}}(u - {u^h})} \|_{ t, \Omega} & \le C\,{h^{s-t -2}}{\left\| u \right\|_{s,\Omega }},&  2-k \le t \le 0,\\
\| {u - {u^h}} \|_{t,\partial \Omega }&  \le C\,{h^{s - t - 1/2}}\,{\left\| u \right\|_{s,\Omega }},& \frac{1}{2}-k \le t \le   0.
\end{array}
$$
\end{The}

\begin{proof}
 Let ${f_1} \in {H^{p-2}}(\Omega )$ and
${g_1} \in {H^{p - 1/2}}(\partial \Omega )$
be given for a fixed $p$ such that $\tau \ge k  \ge p > d/2$ and $p \ge q + 2$.
From Theorem \ref{th4_3}, there exists a function $\varphi  \in {H^{p}}(\Omega )$ that satisfies
\begin{equation}\label{eq5-131}
\begin{cases}
L\varphi  = {f_1}, & \mbox{in}\,\,\,\Omega ,\\
\varphi  = {g_1},& \mbox{on}\,\,\,\partial \Omega .
\end{cases}
\end{equation}
For ${v^h} \in U_{\Phi ,X}$, from (\ref{eq5-11}) and the Cauchy-Schwarz inequality we obtain
\begin{align*}
((u - {u^h},\,\varphi )) &= ((u - {u^h},\,\varphi  - {v^h}))\\
&\le (|||u - {u^h}|||)(|||\varphi  - {v^h}|||).
\end{align*}
Lemma \ref{lem4-8} then yields
\begin{align*}
((u - {u^h},\,\varphi )) &\le (|||u - {u^h}|||)(\mathop {\inf }\limits_{{v^h} \in U_{\Phi ,X}} |||\varphi  - {v^h}|||)\\
& \le C({h^{s  - 2}}{\left\| u \right\|_{s,\Omega }})({h^{p  - 2}}{\left\| \varphi \right\|_{p,\Omega }})\\
& = C{h^{s + p  - 4}}{\left\| u \right\|_{s,\Omega }}{\left\| \varphi \right\|_{p,\Omega }},
\end{align*}
for all $s$ with $\tau \ge k  \ge s > d/2$.
 Now, we can apply Lemma \ref{cor4_1} for problem (\ref{eq5-131}) to obtain
$$
((u - {u^h},\varphi)) \le C{h^{s + p  - 4}}{\left\| u \right\|_{s,\Omega }}\left\{ {{{\left\| {{f_1}} \right\|}_{p - 2,\Omega }} + {{\left\| g_1 \right\|}_{p- 1/2,\partial \Omega }}} \right\}.
$$
From the definition of the discrete energy inner product we have 
\begin{equation}\label{eq5-14}
{}{({{L}}(u - {u^h}),{f_1})_{0,\Omega }} + {h^{ - 3}}{(u - {u^h},{g_1})_{0,\partial \Omega }} \le C{h^{s + p  - 4}}{\left\| u \right\|_{s,\Omega }}\left\{ {{{\left\| {{f_1}} \right\|}_{p - 2,\Omega }} + {{\left\| g_1 \right\|}_{p - 1/2,\partial \Omega }}} \right\}.
\end{equation}
In particular, let $g_1=0$ in (\ref{eq5-14}) to get 
$$
{({{L}}(u - {u^h}),{f_1})_{0,\Omega }} \le C{h^{s + p - 4}}{\left\| u \right\|_{s,\Omega }}{\left\| {{f_1}} \right\|_{p - 2,\Omega }}.
$$
Consequently,
\begin{equation}\label{eq5-15}
{\| {{{L}}(u - {u^h})} \|_{ - (p - 2),\Omega }}  = \mathop {\sup }\limits_{{f_1} \in {H^{p - 2}}(\Omega )}  \frac{{{{({{L}}(u - {u^h}),{f_1})}_{0,\Omega }}}}{{{{\left\| {{f_1}} \right\|}_{p - 2,\Omega }}}} \le C{h^{s + p - 4}}{\left\| u \right\|_{s,\Omega }}.
\end{equation}
In particular, for $p=k$ in (\ref{eq5-15}) we obtain
\begin{equation}\label{eq5-16}
{\| {{{L}}(u - {u^h})} \|_{- (k - 2),\Omega }} \le C\,{h^{s + k - 4}}\,{\left\| u \right\|_{s,\Omega }}.
\end{equation}
Also, the definition of the discrete energy norm 
implies
$$
{\| {{{L}}(u - {u^h})} \|_{0,\Omega }} \le \,|||u - {u^h}|||,
$$
leading to
\begin{equation}\label{eq5-17}
{\| {{{L}}(u - {u^h})} \|_{0,\Omega }} \le \,C\,{h^{s - 2}}\,{\left\| u \right\|_{s,\Omega }},
\end{equation}
by applying Lemma \ref{lem4-8}. Bounds \eqref{eq5-16} and \eqref{eq5-17}
give the estimates for $t = 0$ and $t=-(k-2)$, respectively.  We can use the interpolation theorem on Sobolev spaces
(see \cite[Chapter 4]{Adam} or
\cite[Proposition 2.3]{Lions1})
 to get
\begin{equation}\label{eq5-18}
{\| {{{L}}(u - {u^h})} \|_{t,\Omega }} \le C{\left( {{{\| {{{L}}(u - {u^h})} \|}_{ - (k - 2),\Omega }}} \right)^\theta }{\left( {{{\| {{{L}}(u - {u^h})} \|}_{0,\Omega }}} \right)^{1 - \theta }},
\end{equation}
where $t =  - (k - 2)\theta $, for $0 \le \theta  \le 1$. Inserting estimates (\ref{eq5-16}) and (\ref{eq5-17}) into (\ref{eq5-18})
yields
$$
\| {{{L}}(u - {u^h})} \|_{ t, \Omega}  \le C\,{h^{s-t -2}}{\left\| u \right\|_{s,\Omega }},\quad  2-k \le t \le 0,
$$
for $2 - k \le t \le 0$.
\\
Now, it is possible to choose ${f_1} = 0$ in (\ref{eq5-14}) to get
\[
{(u - {u^h},{g_1})_{0,\partial \Omega }} \le C\,{h^{s + p - 1}}{\left\| u \right\|_{s,\Omega }}{\left\| {{g_1}} \right\|_{p -1/2,\partial \Omega }}.
\]
Hence,
\begin{equation}\label{eq5-20}
{\| {u - {u^h}} \|_{ - (p - 1/2),\partial \Omega }} \le C\,{h^{s + p - 1}}\,{\left\| u \right\|_{s,\Omega }}.
\end{equation}
Now, let $p=k$ in (\ref{eq5-20}) to obtain
\begin{equation}\label{eq5-21}
{\| {u - {u^h}} \|_{ - (k - 1/2),\partial \Omega }} \le C\,{h^{s + p - 1}}\,{\left\| u \right\|_{s,\Omega }}.
\end{equation}
Furthermore, from the definition of the discrete energy norm 
we have
\[
{\| {u - {u^h}} \|_{0,\partial \Omega }} \le \,{h^{ 3/2}}|||u - {u^h}|||.
\]
leading to
\begin{equation}\label{eq5-22}
{\| {u - {u^h}} \|_{0,\partial \Omega }} \le C\,{h^{s - 1/2}}\,{\| u \|_{s,\Omega }}
\end{equation}
after applying Lemma \ref{lem4-8}.
Now, using the interpolation theorem on trace Sobolev spaces \cite{Adam,Lions1} we have
\begin{equation}\label{eq5-23}
{\| {u - {u^h}} \|_{t + 3/2,\partial \Omega }} \le C{({\| {u - {u^h}} \|_{ - (k - 1/2),\partial \Omega }})^\theta }{({\| {u - {u^h}} \|_{0,\partial \Omega }})^{1 - \theta }},\quad 0 \le \theta  \le 1,
\end{equation}
where
$t + 3/2 =  - (k - 1/2)\theta $.
Inserting (\ref{eq5-21}) and (\ref{eq5-22}) to (\ref{eq5-23})
gives
\[
{\| {u-{u^h}}\|_{t + 3/2,\partial \Omega }} \le C\,{h^{s-t-2}}\,{\left\| u \right\|_{s,\Omega }},\quad - k - 1 \le t \le   - {\textstyle{3 \over 2}}.
\]
\end{proof}
\begin{The}\label{th4-11}
Assume that $u \in {H^k}(\Omega )$ is the unique solution of (\ref{eq4-21})-(\ref{eq4-22})
in the CLSP $\left\{ {{H^{q + 2}}(\Omega ),{J_q}(\,\cdot\,)} \right\}$
such that
$\tau  \ge k > d/2$
and $k \ge \max \{ q + 2,4\}$ for some real $q \ge 0$. Also, assume that ${u^h} \in U_{\Phi ,X}$ is the unique minimizer of
$\{ {U_{\Phi ,X},{J^h}(\,\cdot\,)} \}$.
Then, there exists a constant $C>0$ such that for all $s$ with
$ k \ge s \ge q+2 > d/2$ we have
\begin{equation*}\label{eq5-24}
{\| {u - {u^h}} \|_{t,\Omega }} \le C\,{h^{s - t}}\,{\left\| u \right\|_{s,\Omega }},\,\,\,\,\,\,\,\,\,\,\,\,\,\,\,\,\,\,\,\,\,\,\,\,4 - k \le t \le
{\textstyle{1 \over 2}}.
\end{equation*}
\end{The}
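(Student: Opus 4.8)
The plan is to recover the interior Sobolev norm of the error $u-u^h$ from the residual estimates already obtained in Theorems \ref{th4-9} and \ref{th4-10}, by feeding them into the ADN a priori bound in its negative-index form. First I would apply Corollary \ref{cor4_1}, whose conclusion (\ref{eq4-23}) is valid for \emph{all} real values of the regularity index, to the error $e:=u-u^h$ with the index choice $q=t-2$. Since $u\in H^k(\Omega)$ and $u^h\in V_{\Phi,X}^q\subset H^\tau(\Omega)$ with $\tau\ge k\ge 4$, the difference $e$ lies in $H^t(\Omega)$ for every $t\le\tfrac12$, so the estimate is meaningful and reads
\begin{equation*}
\|u-u^h\|_{t,\Omega}\le C\big(\|{L}(u-u^h)\|_{t-2,\Omega}+\|u-u^h\|_{t-1/2,\partial\Omega}\big).
\end{equation*}

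Next I would bound the two terms on the right separately, so that each produces the same power of $h$. For the interior residual term $\|{L}(u-u^h)\|_{t-2,\Omega}$ I would invoke Theorem \ref{th4-9} with its index set equal to $t-2$; this is admissible exactly when $2-k\le t-2\le 0$, i.e. $4-k\le t\le 2$, and it yields the factor $h^{s-(t-2)-2}=h^{s-t}$. For the boundary term I would match $t'+3/2=t-1/2$ in Theorem \ref{th4-10}, i.e. take $t'=t-2$; this is admissible when $-k-1\le t-2\le-\tfrac32$, i.e. $1-k\le t\le\tfrac12$, and again produces the factor $h^{s-(t-2)-2}=h^{s-t}$. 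Both bounds carry the common factor $h^{s-t}\|u\|_{s,\Omega}$, so adding them gives $\|u-u^h\|_{t,\Omega}\le C\,h^{s-t}\|u\|_{s,\Omega}$ on the overlap of the two admissible ranges.

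Finally I would intersect those ranges: the interior step needs $4-k\le t\le 2$ and the boundary step needs $1-k\le t\le\tfrac12$, and since $4-k\ge 1-k$ and $\tfrac12\le 2$, the common interval is exactly $4-k\le t\le\tfrac12$, which is the stated range. The hypothesis $k\ge\max\{q+2,4\}$ is precisely what guarantees $4-k\le 0$, so the optimal $L^2$ case $t=0$ (rate $h^{s}$) is included. The step that requires care is the legitimacy of applying the a priori inequality at the negative regularity index $q=t-2$, which ranges over $[2-k,-\tfrac32]$ and is negative throughout: this rests on the homogeneous-elliptic structure of (\ref{eq4-21})-(\ref{eq4-22}) noted after Corollary \ref{cor4_1}, which furnishes a complete set of homeomorphisms and hence the extension of (\ref{eq4-23}) to all real $q$. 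No genuinely new estimate is needed here; the argument is a bootstrap that trades the two residual bounds back into a norm on $e$ through the stability of the mapping $T$ from Theorem \ref{th4_3}.
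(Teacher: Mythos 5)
Your proposal is correct and follows essentially the same route as the paper: the paper likewise applies the a priori estimate of Corollary \ref{cor4_1} (in its extended, negative-index form) to $u-u^h$ and then inserts Theorems \ref{th4-9} and \ref{th4-10}, the only difference being that the paper writes the estimate at index $t$ and shifts $t\mapsto t-2$ at the end, whereas you shift at the outset; the resulting range $4-k\le t\le \tfrac12$ and the rate $h^{s-t}$ agree. Your explicit intersection of the two admissible ranges and the remark on the validity of the ADN bound at negative indices match the paper's (more tersely stated) argument.
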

\begin{proof}
 Using Lemma \ref{cor4_1} we have
\[
{\| {u - {u^h}} \|_{t + 2,\Omega }} \le C\left( {{{\| {{{L}}(u - {u^h})} \|}_{t,\Omega }} + {{\| {u - {u^h}} \|}_{t + 3/2,\partial \Omega }}} \right),
\]
for all real $t \le {k-2}$. Hence, Theorem \ref{th4-9}
yields
\[
{\| {u - {u^h}} \|_{t + 2,\Omega }} \le C\,\,{h^{s - t - 2}}\,{\| u \|_{s,\Omega }},\quad 2 - k \le t \le  -
{\textstyle{3 \over 2}},
\]
or equivalently
\[
{\| {u - {u^h}} \|_{t,\Omega }} \le C\,\,{h^{s - t}}\,{\left\| u \right\|_{s,\Omega }},\quad 4 - k \le t \le
{\textstyle{1 \over 2}}.
\]
\end{proof}

The last bound of Theorem \ref{th4-11} measures the error in Sobolev norms
$\|\cdot\|_{t,\Omega}$ for $4-k\le t\le 1/2$ that also includes the error estimation in the $L_2$ norm because $k\geq 4$.
The reminder parts of this section are devoted to prove a norm-equivalent property and to extend the above error analysis to
higher order Sobolev norms on the left hand side.
However, our results are hanged on an {\em open problem} that will be stated after some auxiliary lemmas form the
kernel approximation theory. 

The proof of the following {\em inverse inequality} of Bernstein type can be found in \cite{cheung-ling-schaback:2018}.
\begin{Lem}\label{th2_2}
Assume a kernel $\Phi$ satisfying (\ref{eq1_12}) with
 $\tau  > d/2$ is given.
Suppose the domain $\Omega$ is a bounded Lipschitz region satisfying an interior cone condition.
Then for all $u^h \in {U_{\Phi ,X}}$ and all  finite sets $X = \left\{ {{x_1},\ldots,{x_N}} \right\} \subset \Omega $
with separation distance ${q_X}$, there is a constant $C$ depending only on $\Phi$, $\Omega$ and $\mu$ such that for all
$d/2 < \mu  \le \tau $ we have
\begin{equation}\label{eq1_20}
{\| u^h \|_{\tau ,\Omega }} \le C\,q_{X}^{ -\tau + \mu }{\| u^h \|_{\mu  ,\Omega }}.
\end{equation}
\end{Lem}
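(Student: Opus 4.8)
The plan is to prove a \emph{Fourier-analytic Bernstein inequality}. Since every $u^h \in V_{\Phi,X}$ is a finite kernel sum $u^h = \sum_{j=1}^N \alpha_j \Phi(\cdot - x_j)$, it lies in $\mathcal N_\Phi \cong H^\tau(\mathbb R^d)$ and is globally defined on $\mathbb R^d$, with Fourier transform $\widehat{u^h}(w) = \widehat\Phi(w)\,p(w)$ where $p(w) = \sum_j \alpha_j e^{-i w^T x_j}$. I would first establish the inequality in the global norms $\|\cdot\|_{\tau,\mathbb R^d}$ and $\|\cdot\|_{\mu,\mathbb R^d}$ and then transfer it to $\Omega$. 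Using the native-space/Sobolev equivalence of Section~\ref{sect-rbf} together with the spectral characterization of the Sobolev norms, the global estimate reduces to comparing the weighted integrals $\int |\widehat{u^h}|^2 (1+\|w\|^2)^\tau\,dw$ and $\int |\widehat{u^h}|^2 (1+\|w\|^2)^\mu\,dw$.

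First I would split the frequency domain at the scale $R := c/q_X$ for a suitable constant $c$. On the low-frequency region $\{\|w\| \le R\}$ the bound is elementary: since $(1+\|w\|^2)^{\tau-\mu} \le (1+R^2)^{\tau-\mu} \le C\,q_X^{-2(\tau-\mu)}$ there, one obtains
\begin{equation*}
\int_{\|w\|\le R} |\widehat{u^h}|^2 (1+\|w\|^2)^\tau\,dw \le C\,q_X^{-2(\tau-\mu)} \|u^h\|_{\mu,\mathbb R^d}^2,
\end{equation*}
which already produces exactly the target power of $q_X$. Thus everything hinges on the high-frequency tail $\int_{\|w\|>R} |\widehat{u^h}|^2 (1+\|w\|^2)^\tau\,dw$.

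Controlling this tail is the main obstacle, because kernel-space functions are not band-limited, so the classical Bernstein inequality does not apply directly and the separation radius $q_X$ must be exploited. The plan here is to compare $u^h$ with a band-limited surrogate whose Fourier transform is supported in $\{\|w\|\le R\}$ and which agrees with $u^h$ on the centers $X$; since the chosen bandwidth $R\sim q_X^{-1}$ is compatible with the separation of $X$, a Marcinkiewicz--Zygmund/quadrature stability estimate for scattered data lets one bound the high-frequency energy of $u^h$ by its sampled values on $X$, which in turn are controlled by $\|u^h\|_{\mu,\mathbb R^d}$ (using $\mu > d/2$ and a sampling inequality). This step is where the quasi-uniformity hypothesis and the decay assumption \eqref{eq1_12} on $\widehat\Phi$ do the essential work; the Narcowich--Ward lower bounds for kernel matrices in terms of $q_X$ provide the standard quantitative input for the aliasing control.

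Finally, the statement is phrased in the domain norms $\|\cdot\|_{\tau,\Omega}$ and $\|\cdot\|_{\mu,\Omega}$, whereas the Fourier argument lives on $\mathbb R^d$. The inclusion $\|u^h\|_{\tau,\Omega} \le \|u^h\|_{\tau,\mathbb R^d}$ is immediate, while the reconciliation of the global and domain $\mu$-norms on the finite-dimensional space $V_{\Phi,X}$ is handled under the interior cone condition assumed on $\Omega$, via local polynomial reproduction and norming-set arguments together with the native-space equivalence $\mathcal N_\Phi(\Omega)\cong H^\tau(\Omega)$ of Section~\ref{sect-rbf}. Quasi-uniformity, through $q_X \le h_{X,\Omega} \le \rho_X\, q_X$, then guarantees that the resulting constant depends only on $\Phi$, $\Omega$ and $\mu$, as claimed. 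I expect the high-frequency tail to be the genuinely delicate part; the low-frequency split is elementary and the domain/global reconciliation, while technical, follows from the tools already assembled.
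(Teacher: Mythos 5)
The paper does not actually prove this lemma: it is imported from Cheung--Ling--Schaback \cite{cheung-ling-schaback:2018}, whose proof rests on the Bernstein inequality of Narcowich, Ward and Wendland \cite{NAR3}. Your roadmap --- write $\widehat{u^h}=\widehat\Phi\,p$ with $p(w)=\sum_j\alpha_j e^{-iw^Tx_j}$, split the frequency domain at $R\sim q_X^{-1}$, dispose of the low frequencies by comparing the weights $(1+\|w\|^2)^{\tau}$ and $(1+\|w\|^2)^{\mu}$, and attack the high-frequency tail via a band-limited comparison and the Narcowich--Ward separation estimates --- is exactly the architecture of that proof, so the approach is the right one.

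The gap is that the step you yourself call ``the genuinely delicate part'' is the entire content of the lemma and is not carried out. After using \eqref{eq1_12} to replace $\widehat\Phi$ by its algebraic envelope on both sides, what must be shown is
\[
\int_{\|w\|>R}(1+\|w\|^2)^{-\tau}|p(w)|^2\,dw \;\le\; C\,q_X^{-2(\tau-\mu)}\int_{\mathbb R^d}(1+\|w\|^2)^{\mu-2\tau}|p(w)|^2\,dw,
\]
i.e.\ that separation of the frequencies $x_j$ forces the weighted mass of the exponential sum $p$ to concentrate at $\|w\|\lesssim q_X^{-1}$. Invoking ``Marcinkiewicz--Zygmund stability'' and ``Narcowich--Ward lower bounds'' does not by itself produce the exponent: for instance, the natural crude route through the coefficient vector ($\|u^h\|_{\tau}^2\le\lambda_{\max}\|\alpha\|^2$ against $\|u^h\|_{\mu}^2\ge\lambda_{\min}\|\alpha\|^2$, with the eigenvalue bounds of the kind used in Section~\ref{sect-conditioning}) yields only $q_X^{-2(2\tau-\mu)}$, losing a full factor $q_X^{-2\tau}$ relative to the claim, so the sharper aliasing argument is unavoidable and needs to be written down. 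Two further points. The lemma assumes only a separation distance, not quasi-uniformity; your high-frequency step and your constant-tracking both appeal to quasi-uniformity (which the paper only uses afterwards, to replace $q_X$ by $h_{X,\Omega}$ in \eqref{eq1_20}), so as written you would be proving a weaker statement. And your reduction from domain norms to global norms needs the nontrivial direction $\|u^h\|_{\mu,\mathbb R^d}\le C\|u^h\|_{\mu,\Omega}$ for $\mu<\tau$; the native-space equivalence of Section~\ref{sect-rbf} only gives this for $\mu=\tau$ (via the norm-minimal extension), and the case $\mu<\tau$ is itself one of the delicate points settled in \cite{NAR3}, not a routine norming-set argument.
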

By applying Theorem \ref{th4_3}, Lemma \ref{th2_2} with $\mu  =2$ and for quasi-uniform sets $X$ (i.e. $q_X\approx h_{X,\Omega}$), and
by invoking an interpolation argument we can prove that
for all ${u^h} \in U_{\Phi ,X}$ there exists a constant $C>0$, independent of $u^h$, such that for all $q$
with $0 \le q  \le \tau -2 $,
\begin{equation}\label{eq5-52}
{\| {{{L}}{u^h}} \|_{q,\Omega }} \le C{h_{X,\Omega}^{ - q}}\left({\| {{L}{u^h}} \|_{0,\Omega }} + {\| {{u^h}} \|_{3/2,\partial \Omega }}\right)
\end{equation}
where $\Omega  \subset {\mathbb{R}^d}$ with $d \le 3$.

We also need a
{\em sampling inequality} or {\em zeros lemma} to support our argument. 
A variation of zeros lemma that holds for fractional Sobolev norms on both sides of inequality has been proved in \cite{HW4}.
Also, see \cite{NAR2} for older versions.
\begin{Lem}\label{Lem-Sample}
Suppose $\Omega \subset \mathbb{R}^d$ is a bounded Lipschitz domain.
Let $r,k \in \mathbb{R}$ satisfy
$k > d/2$ and $0 \le r \le k$.
If $u \in {H^k}(\Omega )$ satisfies $u\left| {_X} \right. = 0$, then for any discrete sets $X \subset \Omega$ with
sufficiently small mesh norm $h_{X,\Omega}$, there exists a constant $C$ that depends only on $\Omega$ and $k$ such that
\[\,{\left\| u \right\|_r} \le Ch_{X,\Omega}^{k - r}\,{\left\| u \right\|_k}, \quad 0 \le r \le k.\]
\end{Lem}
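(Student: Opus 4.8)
The plan is to prove the estimate only for the extreme values $r=0$ and $r=k$, and then recover the full range $0\le r\le k$ by interpolation in the Sobolev scale, exactly as was done for the operator-norm estimates in Theorems \ref{th4-9} and \ref{th4-10}. First I would handle the top index $r=k$, where the inequality reads $\|u\|_k\le C\,h_{X,\Omega}^{0}\|u\|_k$, i.e.\ it is trivially true with $C=1$; so no work is needed there. The genuine content lies at the bottom of the scale, and the key step I would carry out is to prove a polynomial-reproduction-based bound at an intermediate \emph{integer} level, say for the $L^2$-norm $\|u\|_0$, using the hypothesis $u|_X=0$ together with the fact that $u\in H^k(\Omega)$ with $k>d/2$ embeds into $C(\overline\Omega)$, so that point evaluation on $X$ is meaningful.

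The heart of the argument is a local Bramble--Hilbert / Taylor-polynomial estimate. For each point $x\in\Omega$ choose a data site $x_j\in X$ with $\|x-x_j\|\le h_{X,\Omega}$ (possible by the definition of the fill distance). Because $\Omega$ satisfies an interior cone condition (a Lipschitz domain does), I would cover $\Omega$ by overlapping balls of radius comparable to $h_{X,\Omega}$ each containing a data site, and on each such ball invoke the standard polynomial-reproduction estimate for functions in $H^k$ that vanish on a sufficiently dense sampling set: the local $L^2$-error between $u$ and its averaged Taylor polynomial of degree $\lfloor k\rfloor-1$ is controlled by $h_{X,\Omega}^{k}$ times the local $H^k$-seminorm, and the vanishing condition $u|_X=0$ lets one absorb the polynomial part since $u$ already equals zero at the interior node. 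Summing these local contributions over a bounded-overlap cover produces
\[
\|u\|_0 \le C\,h_{X,\Omega}^{\,k}\,\|u\|_k,
\]
which is precisely the $r=0$ endpoint. This is really a restatement of the classical result of Narcowich--Ward--Wendland, and I would cite \cite{NAR2,HW4} rather than reprove the cone-condition machinery in detail.

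With the two endpoints $r=0$ and $r=k$ in hand, I would close the gap by the interpolation theorem on Sobolev spaces \cite{Adam,Lions1}: for $0\le r\le k$ write $r=(1-\theta)\cdot 0+\theta\cdot k$ with $\theta=r/k\in[0,1]$, so that
\[
\|u\|_r \le C\,\|u\|_0^{\,1-\theta}\,\|u\|_k^{\,\theta}
\le C\,\bigl(h_{X,\Omega}^{\,k}\|u\|_k\bigr)^{1-\theta}\,\|u\|_k^{\,\theta}
= C\,h_{X,\Omega}^{\,k(1-\theta)}\,\|u\|_k
= C\,h_{X,\Omega}^{\,k-r}\,\|u\|_k,
\]
which is the claimed bound. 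The main obstacle is the endpoint estimate at $r=0$: one must ensure the local polynomial-reproduction argument is valid for \emph{fractional} $k>d/2$ (not merely integer $k$), which is exactly why the fractional-norm version of the zeros lemma from \cite{HW4} is needed and why the continuous embedding $H^k(\Omega)\hookrightarrow C(\overline\Omega)$ is indispensable to make the pointwise vanishing hypothesis $u|_X=0$ meaningful. Everything else is routine scale interpolation.
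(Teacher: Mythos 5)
The paper does not actually prove this lemma: it is quoted as a known \emph{sampling inequality} (zeros lemma), with the fractional-norm version attributed to \cite{HW4} and older versions to \cite{NAR2}. Your proposal therefore does strictly more than the source. Its architecture --- the trivial endpoint at $r=k$, the classical zeros lemma at $r=0$, and Sobolev-scale interpolation with $\theta=r/k$ to fill in $0<r<k$ --- is sound, the exponent bookkeeping $h_{X,\Omega}^{k(1-\theta)}=h_{X,\Omega}^{k-r}$ is correct, and the interpolation inequality $\|u\|_{r,\Omega}\le C\|u\|_{0,\Omega}^{1-\theta}\|u\|_{k,\Omega}^{\theta}$ is legitimate on a bounded Lipschitz domain and consistent with how interpolation is used elsewhere in the paper (Lemma \ref{Le_inverse}, Theorems \ref{th4-9} and \ref{th4-10}). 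What your route buys is a reduction of the whole range $0\le r\le k$ to the single $L^2$ endpoint; what the cited results buy is the estimate for all fractional $r$ directly, without needing to invoke the interpolation-scale property of $H^r(\Omega)$.

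One point in your sketch of the $r=0$ endpoint is misstated: vanishing at a \emph{single} data site per ball of radius comparable to $h_{X,\Omega}$ does not ``absorb the polynomial part'' once the local polynomial degree exceeds zero. The actual mechanism in \cite{NAR2} is to work on patches of radius $C(k)\,h_{X,\Omega}$ chosen so that $X$ restricted to each patch is a \emph{norming set} for polynomials of degree at most $\lceil k\rceil-1$; only then does $u|_X=0$ force the local polynomial approximant to be small. Moreover, for fractional $k$ the local $H^k$-seminorms are not additive over a cover, which is exactly the technical obstruction resolved in \cite{HW4}. Since you explicitly defer to \cite{NAR2,HW4} for this machinery rather than claiming to reprove it, these are imprecisions of description rather than logical gaps; but the one-node-per-ball argument as literally written would fail for $k>1$ if carried out from scratch.
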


The weighted DLSP setting uses the $L^2$-norm for the boundary part while CLSP involves the boundary norm
$\left\|\cdot \right\|_{q + 3/2,\partial \Omega }$.
Thus, we need an inverse inequality that relates
$\left\| \cdot \right\|_{3/2,\partial \Omega }$
and
$\left\| \cdot \right\|_{0,\partial \Omega }$ for approximating function $u^h \in U_{\Phi,X}$.
The proof of
such boundary inverse inequality seems to need a counterpart inverse inequality in $\Omega$ that is still an open problem. Thus we conjecture
\begin{itemize}
\item[]\textbf{Conjecture A.} For all finite quasi-uniform set
$X \subset \Omega \subset \mathbb{R}^d$, with sufficiently small fill distance $h$,
 there exists a constant $C$, depending only on $\Omega$, $\partial \Omega$, and $\Phi$ such that for all $u \in {U_{\Phi ,X}}$,
\begin{equation}\label{eq1_231}
\| u \|_{3/2,\partial \Omega } \le C\,h^{ - 3/2}\| u \|_{0,\partial \Omega }.
\end{equation}
\end{itemize}
Apart form Theorem \ref{th4-11}, Conjecture A will be used to prove the error bound for the least-squares method in $\|\cdot\|_{t,\Omega}$ for $0< t\le k$ when $u\in H^k(\Omega)$; see Theorem  \ref{thm4-11} below. Also, it supports our estimation for the lower bound of the smallest eigenvalue of the final matrix in section \ref{sect-conditioning}.

Using Conjecture A, in the following we show that the weighted least-squares
functional satisfies a data-dependent energy balance.
\begin{Lem}\label{th4-5}
For all real $q$ with $0 \le q \le \tau  - 2$, there exists a positive constant $C$ independent of $u^h$ such that for all
${u^h} \in U_{\Phi ,X}$ with  $0 < h \le 1$ and $d \le 3$ the inequality
\begin{equation}\label{eq5-53}
C^{-1}h^{2q}\| u^h \|_{q + 2,\Omega }^2 \le Q^h({u^h},{u^h})
\end{equation}
holds.
\end{Lem}
\begin{proof}
Let $q=\tau -2$. Using Lemma \ref{cor4_1} and bound \eqref{eq5-52} we have
$$
\begin{array}{rll}
\| {{u^h}} \|_{\tau,\Omega }^2 &  \le C\,{\left( {{{\| {{{L}}{u^h}} \|}_{\tau -2,\Omega }} + {{\| {{u^h}} \|}_{\tau- 1/2,\partial \Omega }}} \right)^2} &\\
&\le C\,{\left( {{{\| {{{L}}{u^h}} \|}_{\tau -2,\Omega }} + {{\| {{u^h}} \|}_{\tau ,\Omega }}} \right)^2}&(\mbox{Theorem \ref{lem4_2}}) \\
&\le C\,{h^{ - 2(\tau -2)}}\,{\left( {{{\| {{{L}}{u^h}} \|}_{0,\Omega }} + {{\| {{u^h}} \|}_{3/2,\partial \Omega }} + {{\| {{u^h}} \|}_{2,\Omega }}} \right)^2}& (\mbox{using \eqref{eq5-52} and Lemma \ref{th2_2}})\\
& \le C\,{h^{ - 2(\tau -2)}}\,{\left( {{{\| {{{L}}{u^h}} \|}_{0,\Omega }} + {{\| {{u^h}} \|}_{3/2,\partial \Omega }}} \right)^2}&(\mbox{using (\ref{eq4-321})})\\
&\le C\,{h^{ - 2(\tau -2)}}\,\left( {\| {{{L}}{u^h}} \|_{0,\Omega }^2 + \| {{u^h}} \|_{3/2,\partial \Omega }^2} \right)\\
&= C\,{h^{ - 2(\tau -2)}}\,\left( {\| {{{L}}{u^h}} \|_{0,\Omega }^2 + h^{-3}\| {{u^h}} \|_{0,\partial \Omega }^2} \right)& (\mbox{using (\ref{eq1_231}}))\\
& = C\,{h^{ - 2(\tau -2)}} Q^h({u^h},{u^h}).
\end{array}
$$
Now, by using the interpolation theorem for $q=0$ and $q = \tau-2$ we get
\begin{align*}
{\| {{u^h}} \|_{q + 2,\Omega }} &  \le C{\left( {{{\| {{u^h}} \|}_{\tau ,\Omega }}} \right)^{1 - \theta }}{\left( {{{\| {{u^h}} \|}_{2,\Omega }}} \right)^\theta }\\
& \le C{\left( {{h^{2 - \tau }}\left[ {{{\| {{{L}}{u^h}} \|}_{0,\Omega }} + {{\| {{u^h}} \|}_{3/2,\partial \Omega }}} \right]} \right)^{1 - \frac{{2 - \tau  + q}}{{2 - \tau }}}}{\left( {{{\| {{{L}}{u^h}} \|}_{0,\Omega }} + {{\| {{u^h}} \|}_{3/2,\partial \Omega }}} \right)^{\frac{{2 - \tau  + q}}{{2 - \tau }}}} \\
& \le C{h^{ - q}}({\| {{{L}}{u^h}} \|_{0,\Omega }} + {\| {{u^h}} \|_{3/2,\partial \Omega }}),
\end{align*}
where $q + 2 = (1 - \theta )\tau  + 2\theta $, for $0 \le \theta  \le 1$. Hence, for all $0\, \le q \le \tau  - 2$,
$$
\begin{array}{rll}
\| {{u^h}} \|_{q + 2,\Omega }^2 &\le C{h^{ - 2q}}(\| {{{L}}{u^h}} \|_{0,\Omega }^2 + \| {{u^h}} \|_{3/2,\partial \Omega }^2) &\\
&  \le C{h^{ - 2q}}(\| {{{L}}{u^h}} \|_{0,\Omega }^2 + {h^{ - 3}}\| {{u^h}} \|_{0,\partial \Omega }^2)&
(\mbox{using (\ref{eq1_231}}))\\
&= C {h^{ - 2q}}Q^h( {{u^h},{u^h}} ) &.
\end{array}
$$
\end{proof}
\begin{Lem}\label{th*4-5}
There exist a positive constant $C$, independent of $u^h$, such that for all
${u^h} \in U_{\Phi ,X}$ with  $0 < h \le 1$ we have
\begin{equation*}\label{eq5-531}
Q^h({u^h},{u^h}) \le C\,{h^{ - 4}}\| u^h \|_{ 2,\Omega }^2.
\end{equation*}
\end{Lem}
\begin{proof}
From (\ref{eq4-321}) for some constant $C>0$ we get
\begin{align*}
Q^h({u^h},{u^h}) =& \| {{{L}}{u^h}} \|_{0,\Omega }^2 + {h^{ - 3}}\| {{u^h}} \|_{0,\partial \Omega }^2 \\
&\le {\left( {{{\| {{{L}}{u^h}} \|}_{0,\Omega }} + {h^{ -3/2}}{{\| {{u^h}} \|}_{0,\partial \Omega }}} \right)^2}\\
& \le C{\left( {{{\| {{u^h}} \|}_{2,\Omega }} + {h^{ -3/2}}{{\| {{u^h}} \|}_{0,\partial \Omega }}} \right)^2}.
\end{align*}
Moreover, for $0 < h \le 1$ by using the inequality (\ref{L2boundH1L2}), 
we obtain for $\eta  = {h^{1/2}}$
\begin{align*}\label{eq5-6}
Q^h({u^h},{u^h}) &\le C{\left\{ {{{\| {{u^h}} \|}_{2,\Omega }} + {h^{- 3/2}}\left( {{h^{ - 1/2}}{{\| {{u^h}} \|}_{0,\Omega }} + {h^{1/2}}{{\| {{u^h}} \|}_{1,\Omega }}} \right)} \right\}^2}\\
&= C{\left( {{}{{\| {{u^h}} \|}_{2,\Omega }} + {h^{  - 1}}{{\| {{u^h}} \|}_{1,\Omega }} + {h^{ - 2}}{{\| {{u^h}} \|}_{0,\Omega }}} \right)^2}\\
& \le C{h^{ - 4}}\| {{u^h}} \|_{2,\Omega }^2.
\end{align*}
\end{proof}
Lemmas \ref{th4-5} and \ref{th*4-5} show that
the discrete energy inner product
$
((\,\cdot\,,\,\cdot\,)):U_{\Phi ,X} \times U_{\Phi ,X} \to \mathbb{R}
$
and the discrete energy norm
$|||\cdot|||$
are related to the Sobolev norms of the solution space. 
\begin{Cor}\label{lem4-6}
Assume  $0 \le q \le \tau  - 2$ is given and  $0 < h \le 1$ is sufficiently small.
 Then, for all ${u^h} \in U_{\Phi ,X}$  there exists a constant $C>0$,
independent of $u^h$, such that
\begin{equation*}\label{eq5-9}
{C^{ - 1}} h^{q}|||{u^h}||{|_q} \le |||{u^h}|||\le C\,{h^{ - 2}}|||{u^h}||{|_0}\le C\,{h^{ - 2}}|||{u^h}||{|_q}.
\end{equation*}
\end{Cor}

In the following, we aim to extend our error analysis in Theorem \ref{th4-11} to other (positive) Sobolev norms.
\begin{Lem}\label{lem4-141}
Suppose that $\Phi$ satisfies (\ref{eq1_12}) with
$\tau  \ge k > d/2$,
 and $\Omega  \subset {\mathbb{R}^d}$ is a bounded Lipschitz domain. Furthermore, let
$X \subset \Omega$ be a discrete set of centers with sufficiently small mesh norm ${h}=h_{X,\Omega}$.
If $u \in {H^k}(\Omega )$ and
the RBF interpolant of $u$ on $X$ is given by ${I_X}u$
then there exists a constant $C>0$ such that for all $s$ with
$\tau  \ge k \ge s \ge q+2 > d/2$
we have
\begin{equation*}\label{eq5-102}
|||u - {I_X}u||| \le C\,{h^{s  - 2}}\,{\left\| {u - {I_X}u} \right\|_s}.
\end{equation*}
\end{Lem}
\begin{proof}
Using the definition of $|||\,\cdot\,|||$ we can write
\begin{align*}
|||u - {I_X}u||| &= {\left( {{}\left\| {{{L}}(u - {I_X}u)} \right\|_{0,\Omega }^2 + {h^{  - 3}}\left\| {u - {I_X}u} \right\|_{0,\partial \Omega }^2} \right)^{1/2}}\\
& \le C\left( {{}{{\left\| {u - {I_X}u} \right\|}_{2,\Omega }} + {h^{- 3/2}}{{\left\| {u - {I_X}u} \right\|}_{0,\partial \Omega }}} \right)\\
&\le C\left( {{}{{\left\| {u - {I_X}u} \right\|}_{2,\Omega }} + {h^{  - 1}}{{\left\| {u - {I_X}u} \right\|}_{1,\Omega }} + {h^{  - 2}}{{\left\| {u - {I_X}u} \right\|}_{0,\Omega }}} \right) \\
& \le C({}{h^{s - 2}}\,{\left\| {u - {I_X}u} \right\|_s} + {h^{ - 1}}{h^{s - 1}}\,{\left\| {u - {I_X}u} \right\|_s} + {h^{ - 2}}{h^s}\,{\left\| {u - {I_X}u} \right\|_s}) \\
&=C{h^{s - 2}}\,{\left\| {u - {I_X}u} \right\|_s},
\end{align*}
which completes the proof. In the third line above
the bound \eqref{L2boundH1L2} is applied with
$\eta  = {h^{1/2}}$, and in the fourth line Lemma \ref{Lem-Sample} is used.
\end{proof}
If $u \in {H^k}(\Omega )$ is the unique minimizer of
$\left\{ {{H^{q + 2}}(\Omega ),{J_q}} \right\}$ for $q \ge 0$ and
$u^h\in U_{\Phi,X}$ is the unique minimizer of
$\left\{ U_{\Phi,X},{J^h}\right\}$ then by
using the fact that $u^h$ is the best approximation in the discrete energy norm, and under the assumptions and notations of Lemma \ref{lem4-141}
 we have
\begin{equation*}
|||u - {u^h}||| \le |||u - {I_X}u||| \le C{h^{s - 2}}\,{\left\| {u - {I_X}u} \right\|_{s,\Omega }},
\end{equation*}
which turns the error of PDE solution into the error of pure interpolation problem.
By applying  the bound $\|u-I_Xu\|_{s,\Omega}\leq \|u\|_{s,\Omega}$ from Theorem \ref{th2_1}
we obtain
\begin{equation}\label{bound_energynorm}
|||u - {u^h}||| \le Ch^{s-2}\|u\|_{s,\Omega }, \quad d/2<s\le k\le \tau,
\end{equation}
for true solution $u\in H^k(\Omega)$. Altogether, we have the following theorem.
\begin{The}\label{thm4-11}
Assume that $\Phi$ satisfies \eqref{eq1_12} for $\tau>d/2$ and $u \in {H^k}(\Omega )$ is the unique solution of (\ref{eq4-21})-(\ref{eq4-22}), where $k$
is a real number such that
$\tau  \ge k > d/2$
 for $d \le 3$, and $k \ge \max \{ q + 2,4\}$ for some real $q \ge 0$. 
 Moreover, assume that ${u^h} \in U_{\Phi ,X}$ is the unique minimizer of
$\{ {U_{\Phi ,X},{J^h}} \}$. Then
\begin{equation*}\label{eq5-541}
{\| {u - {u^h}} \|_{t,\Omega }} \le C\,{h^{k - t}}{\left\| u \right\|_{k,\Omega }},\quad 0\le t \le k.
\end{equation*}
\begin{proof}
Let us first assume $t= k$. Then the inequality
\begin{align*}
{\| {u - {u^h}} \|_{k,\Omega }} &\le {\left\| {u - {I_X}u} \right\|_{k,\Omega }} + {\| {{u^h} - {I_X}u} \|_{k,\Omega }},
\end{align*}
suggests that we can focus on the difference  ${u^h} - {I_X}u \in {U_{\Phi ,X}}$
and on the difference ${u} - {I_X}u \in H^k(\Omega)$ where ${I_X}u$ denotes the unique interpolant of the exact solution $u$
from the trial space ${U_{\Phi ,X}} \subset H^\tau(\Omega)$.
For the first norm on the right hand side, from Theorem \ref{th2_1} we have
$$
\left\| {u - {I_X}u} \right\|_{k,\Omega }\le C\|u\|_{k,\Omega}.
$$
Since the result in Theorem \ref{th4-5} only applies to functions in the trial space,  for $d \le 3$,
we obtain
\begin{equation*}
{\| {{u^h} - {I_X}u} \|_{k,\Omega }} \le C {h^{2-k}}|||{u^h} - {I_X}u||| \le C {h^{2-k}}( |||u - {I_X}u||| + |||u - {u^h}|||).
\end{equation*}
Using the discussions right before the theorem for $s=k$, the right hand side can be bounded by $C\|u\|_{k,\Omega}$ to get
$$
\| {{u^h} - {I_X}u} \|_{k,\Omega } \le C \|u\|_{k,\Omega}.
$$
Combining the recent bounds, we obtain the following stability bound
\begin{align*}
{\| {u - {u^h}} \|_{k,\Omega }} \le C{\left\| u \right\|_{k,\Omega }}.
\end{align*}
On the other hand, putting  $s=k$ and $t=0$ in Theorem \ref{th4-11} yields
 \begin{equation*}
{\| {u - {u^h}} \|_{0,\Omega }} \le C\,{h^{ k }}{\left\| u \right\|_{k,\Omega }}.
\end{equation*}
Henceforth, for $0 \le t\le k$, using the interpolation theorem we have
\begin{align*}
{\| {u - {u^h}} \|_{t,\Omega }} &\le C\,{\left( {{{\| {u - {u^h}} \|}_{0,\Omega }}} \right)^{1 - \theta }}{\left( {{{\| {u - {u^h}} \|}_{k,\Omega }}} \right)^\theta }\\
&  \le C{\left( {{h^k}{{\left\| u \right\|}_{k,\Omega }}} \right)^{1 - t/k}}{\left( {{{\left\| u \right\|}_{k,\Omega }}} \right)^{t/k}}\\
& = C\,{h^{k - t}}{\left\| u \right\|_{k,\Omega }},
\end{align*}
where $t = \theta k$ and $0 \le \theta  \le 1$.
\end{proof}
\end{The}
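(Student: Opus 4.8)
The plan is to reduce the full range $0\le t\le k$ to the two endpoints $t=0$ and $t=k$ and then recover every intermediate value by interpolation on the scale of Sobolev spaces. The bottom endpoint is essentially free: since the hypothesis $k\ge\max\{q+2,4\}$ forces $k\ge4$, the value $t=0$ lies in the admissible range $4-k\le t\le\tfrac12$ of Theorem \ref{th4-11}. Taking $s=k$ and $t=0$ there gives at once the optimal $L^2$ bound
\begin{equation*}
\|u-u^h\|_{0,\Omega}\le C\,h^{k}\|u\|_{k,\Omega}.
\end{equation*}

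The substantive work is the top endpoint $t=k$, where only a \emph{stability} estimate $\|u-u^h\|_{k,\Omega}\le C\|u\|_{k,\Omega}$ can be expected. I would insert the RBF interpolant $I_Xu\in V_{\Phi,X}=V_{\Phi,X}^q$ and split
\begin{equation*}
\|u-u^h\|_{k,\Omega}\le \|u-I_Xu\|_{k,\Omega}+\|u^h-I_Xu\|_{k,\Omega}.
\end{equation*}
The first term is bounded by $C\|u\|_{k,\Omega}$ directly from Theorem \ref{th2_1} with $r=k$. The second term lives in the trial space, so I would apply the inverse estimate of Theorem \ref{th4-5} with $q=k-2$ (legitimate because $k\le\tau$), which reads $h^{2(k-2)}\|u^h-I_Xu\|_{k,\Omega}^2\le C\,|||u^h-I_Xu|||^2$ and hence $\|u^h-I_Xu\|_{k,\Omega}\le C\,h^{2-k}\,|||u^h-I_Xu|||$. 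The energy norm on the right is controlled by the triangle inequality together with the two convergence bounds already in hand, namely $|||u-u^h|||\le C\,h^{k-2}\|u\|_{k,\Omega}$ (the displayed bound preceding the theorem, with $s=k$) and $|||u-I_Xu|||\le C\,h^{k-2}\|u\|_{k,\Omega}$ (Lemma \ref{lem4-141} with $s=k$, followed by Theorem \ref{th2_1}). The loss factor $h^{2-k}$ then cancels against the gain $h^{k-2}$, yielding $\|u^h-I_Xu\|_{k,\Omega}\le C\|u\|_{k,\Omega}$ and therefore the stability bound at $t=k$.

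Finally I would interpolate between the two endpoints. Writing $t=\theta k$ with $\theta=t/k\in[0,1]$, the interpolation inequality on $H^t(\Omega)$ gives
\begin{equation*}
\|u-u^h\|_{t,\Omega}\le C\,\|u-u^h\|_{0,\Omega}^{1-\theta}\,\|u-u^h\|_{k,\Omega}^{\theta}\le C\,\bigl(h^{k}\|u\|_{k,\Omega}\bigr)^{1-t/k}\,\|u\|_{k,\Omega}^{t/k}=C\,h^{k-t}\|u\|_{k,\Omega},
\end{equation*}
which is the claimed estimate for all $0\le t\le k$.

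The main obstacle is the top endpoint: the inverse estimate of Theorem \ref{th4-5} loses precisely $h^{2-k}$, so the entire argument hinges on balancing this loss against the $h^{k-2}$ gain delivered by energy-norm convergence. This exact cancellation is available only because the restriction $d\le3$ permits the boundary inverse inequality B1 to be used inside Theorem \ref{th4-5}; without it the stability bound at $t=k$ would fail. Once that endpoint is secured, the remainder is routine triangle-inequality bookkeeping and Sobolev interpolation.
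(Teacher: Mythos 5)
Your proposal is correct and follows essentially the same route as the paper's proof: the same splitting $\|u-u^h\|_{k,\Omega}\le\|u-I_Xu\|_{k,\Omega}+\|u^h-I_Xu\|_{k,\Omega}$, the same use of the inverse estimate of Theorem \ref{th4-5} (with $q=k-2$) balanced against the energy-norm convergence bounds $|||u-u^h|||,\,|||u-I_Xu|||\le Ch^{k-2}\|u\|_{k,\Omega}$, the same $L^2$ endpoint from Theorem \ref{th4-11}, and the same Sobolev interpolation to cover $0\le t\le k$. Your explicit remark on the exact cancellation of $h^{2-k}$ against $h^{k-2}$ and the role of assumption B1 for $d\le 3$ is a faithful reading of what the paper's argument implicitly relies on.
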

\subsection{Condition Numbers}\label{sect-conditioning}
In this subsection we estimate the condition number of the presented least-squares method.
Under reasonable assumptions, the condition number of the discrete least-squares matrix
is controlled by the mesh size $h$ and regularity parameter $\tau$.
Since the final matrix
$A$ is positive definite,
 its condition number can be defined as
\begin{equation*}
\cond_2(A) = \frac{\lambda_{\max}(A)}{\lambda_{\min}(A)}.
\end{equation*}
where ${{\lambda _{\max }(A)}}$ and ${{\lambda _{\min }(A)}}$ denote the largest and smallest eigenvalue
of %
$A$, respectively.
An appropriate way to bound ${{\lambda _{\min }}}$ is the use of an inverse inequality in the trial
space to turn the conditioning of the PDE 
matrix back to one of the approximation theory.
To this end, we review a lemma from
 \cite{HW3} that computes a lower bound for the
 smallest eigenvalue
 of interpolation matrix by kernel $\Phi$.
\begin{Lem}\label{lem-cond}
Assume $B_{\Phi,X} = (\Phi(x_j-x_k))_{j,k=1}^N$ is the usual interpolation matrix. Then the minimum eigenvalue of $B_{\Phi,X}$ can be bounded by
\begin{equation}\label{lminbound}
\lambda_{\min}(B_{\Phi,X})\geqslant Cq_X^{2\tau-d}.
\end{equation}
\end{Lem}
\begin{Lem}\label{thm-cond1}
Suppose that $\Phi$ satisfies \eqref{eq1_12} for $\tau>d/2$, $X\subset\Omega\subset \mathbb R^d$ is quasi-uniform and $h=h_{X,\Omega}$ is sufficiently small. 
The minimum eigenvalue of $A$, for $d \le 3$, can be bounded by
$$
\lambda_{\min}(A)  \geqslant C h^{4\tau-d-4}.
$$
\end{Lem}
\begin{proof}
 An appropriate formula for $\lambda_{\min}(A)$ is
$$
\lambda_{\min}(A)  = \min_{0\neq \xi\in \R^{N}}  \frac{\xi^TA\xi}{\|\xi\|^2}.
$$
For a given $\xi\in \R^{N}$ assume that
$u^h = \sum_{j=1}^N \xi_j\Phi(\cdot-x_j)$.
Thus, by using (\ref{eq5-53}) and inverse inequality (\ref{eq1_20}) we deduce
\begin{align*}
\xi^TA \xi &= ((u^h,u^h)) = |||u^h|||^2  \\
& \geqslant C h^{2\tau-4}\|u^h\|_{\tau,\Omega}^2 \quad~~~~~~~ (\mbox{ Theorem \ref{th4-5}}) \\\
& \geqslant C h^{2\tau-4}\|u^h\|_{\Phi}^2  \\
& = C h^{2\tau-4}\xi^T B_{\Phi,X}\xi \\
& \geqslant C h^{2\tau-4} h^{2\tau-d}\|\xi\|^2, \quad~~~ (\mbox{using bound \eqref{lminbound}})
\end{align*}
which shows that
$\lambda_{\min}(A)  \geqslant C h^{4\tau-d-4}$.
\end{proof}
To bound ${{\lambda _{\max }(A)}}$,
we need some results about derivatives of positive definite functions. 
In \cite{Bu1, Bu2, Ma} it is proved that certain
derivatives of positive definite functions are also positive (or negative) definite.  Authors show that some simple conditions on
even order derivatives of positive definite functions at the origin strongly determine their global properties. In particular,  they show that the
derivatives of a smooth positive definite function can be estimated in terms of the even order derivatives at the origin.
Proposition 3.2 of \cite{Ma} prove 
that
if $\Phi$ is a positive definite function of class $C^{2n}$ in
some neighborhood of the origin, for some positive integer $n$, then for each $\,\left| \alpha \right| \le n$ the function
$\,{( - 1)^{\left| \alpha \right|\,}}{D^{2\alpha }}\Phi $ is positive definite of class ${C^{2(n - \left| \alpha \right|)}}(\mathbb{R}^d)$.
Also, the following inequality holds for $\,\left| \alpha \right| , \left| \beta \right| \le n$,
\begin{equation*}
|{D^{\alpha  + \beta }}\Phi (x - \cdot)|^2 \le {( - 1)^{|\alpha  + \beta |}}{D^{2\alpha }}\Phi (0){D^{2\beta }}\Phi (0),\quad x \in {\mathbb{R}^d}.
\end{equation*}
Therefore, if $\Phi \in C^{2n}(\Omega)$  for $n \ge 1$, then
\begin{equation*}\label{eq-derivative1}
|{D^\alpha }\Phi (x - \cdot)|^2 \le  - \Phi (0){D^{2\alpha }}\Phi (0)\;\, \mathrm{for} \;\,\left| \alpha \right| = 1,
\end{equation*}
and  for $|\gamma|=2 $
\begin{equation*}\label{eq-derivative2}
|{D^\gamma }\Phi (x - \cdot)|^2 = |{D^{\alpha  + \beta }}\Phi (x - \cdot)|^2 \le {D^{2\alpha }}\Phi (0){D^{2\beta }}\Phi (0)\;\,\mathrm{for} \;\,\left| \alpha \right| = \left| \beta \right| = 1.
\end{equation*}
Henceforth,
\begin{equation}\label{eq-derivative3}
{\sum_{|\alpha | \le 2} {\left\| {{D^\alpha }\Phi (x - \cdot)} \right\|} _{\infty ,\Omega }^2} \le \sum_{|\alpha | = |\beta | = 1} {\max \{ \Phi (0), - \Phi (0){D^{2\alpha }}\Phi (0),{D^{2\alpha }}\Phi (0){D^{2\beta }}\Phi (0)\} } .
\end{equation}
\begin{Lem}\label{thm-cond2}
Suppose that $X\subset\Omega\subset \mathbb R^d$ is quasi-uniform and $h=h_{X,\Omega}$ is sufficiently small. Also, assume that $\Phi \in C^{2n} (\Omega)$
for some $n \ge 1$.
Then the maximum eigenvalue of $A$ can be bounded by
$$
\lambda_{\max}(A) \leqslant C h^{-d-4}.
$$
\end{Lem}
\begin{proof}
We can employ (\ref{eq-derivative3}) and the inequality $|\sum\nolimits_{j = 1}^N {{\xi _j}} {|^2} \le N\sum\nolimits_{j = 1}^N {|{\xi _j}{|^2}} $,  to deduce that
\begin{align*}
\| {{u^h}} \|_{2,\Omega }^2 &= \sum_{\left| \alpha  \right| \le 2} {\int\nolimits_\Omega  {{{[ {{D^\alpha }{u^h}} ]}^2}} } dx\\
&=\sum_{\left| \alpha  \right| \le 2} {\int\nolimits_\Omega  {{{\Big[ {\sum_j {{\xi _j}{D^\alpha }\Phi (x - {x_j})} } \Big]}^2}} } dx\\
& \le \sum_{\left| \alpha  \right| \le 2} {\left\| {{D^\alpha }\Phi (x - {x_j})} \right\|_{\infty ,\Omega }^2 \times \int\nolimits_\Omega  {\Big|\sum_j {{\xi _j}} {\Big|^2}} } dx\\
& \le \sum_{\left| \alpha  \right| \le 2} {\left\| {{D^\alpha }\Phi (x - {x_j})} \right\|_{\infty ,\Omega }^2 \times \Big|\sum_j {{\xi _j}} {\Big|^2} \times \mathrm{vol}(\Omega) } \\
& \le \mathrm{vol}(\Omega) \times \Big|\sum_j {{\xi _j}} {\Big|^2} \times \sum_{\left| \alpha  \right| = \left| \beta  \right| = 1} {\max \{ {\Phi (0),( - 1)\Phi (0){D^{2\alpha }}\Phi (0),{D^{2\alpha }}\Phi (0){D^{2\beta }}\Phi (0)} \}} \\
&  \le C\Big|\sum_j {{\xi _j}} {\Big|^2}
\le CN{\sum_j {\left| {{\xi _j}} \right|} ^2}
\le C h^{-d} \|\xi\|^2. \quad ~~~~~~~(\mbox{Since $N =\mathcal O(h^{-d})$})
\end{align*}
Thus we can conclude
\begin{align*}
\xi^TA \xi &=|||u^h|||^2
\le Ch^{-4}\|u^h\|_{2,\Omega}^2
\le Ch^{-4} h^{-d} \|\xi\|^2
\end{align*}
which gives the desired bound.
\end{proof}
\begin{Cor}\label{cor-condioton}
For $d \le 3$, the condition number of the final linear system of the least-squares kernel-based method is bounded by
$$
\cond_2(A)\leqslant Ch^{-4\tau}.
$$
\end{Cor}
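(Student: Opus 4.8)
The plan is to obtain the bound by simply combining the two eigenvalue estimates already established in Theorem \ref{thm-cond1} and Theorem \ref{thm-cond2}. Since $A$ is symmetric and positive definite (Theorem \ref{th3_1}), its spectrum is positive and the definition $\cond_2(A) = \lambda_{\max}(A)/\lambda_{\min}(A)$ is meaningful, so the task reduces to dividing an upper bound for $\lambda_{\max}(A)$ by a lower bound for $\lambda_{\min}(A)$.

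First I would recall from Theorem \ref{thm-cond2} the upper estimate $\lambda_{\max}(A) \le C\,h^{-d-4}$, which holds for quasi-uniform $X$ and sufficiently small $h$ whenever $\Phi \in C^{2n}(\Omega)$ for some $n \ge 1$. Next I would recall from Theorem \ref{thm-cond1} the lower estimate $\lambda_{\min}(A) \ge C\,h^{4\tau - d - 4}$, valid for $d \le 3$ under the algebraic-decay hypothesis \eqref{eq1_12} on $\widehat\Phi$. Dividing these gives
\begin{equation*}
\cond_2(A) = \frac{\lambda_{\max}(A)}{\lambda_{\min}(A)} \le \frac{C\,h^{-d-4}}{C\,h^{4\tau - d - 4}} = C\,h^{-4\tau},
\end{equation*}
where the exponent collapses according to $(-d-4) - (4\tau - d - 4) = -4\tau$, and the restriction $d \le 3$ is inherited from the lower bound of Theorem \ref{thm-cond1}.

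Since both constituent theorems are already proved, there is no substantive obstacle in this step; the entire analytic effort sits in Theorems \ref{thm-cond1} and \ref{thm-cond2}. The only point warranting care is verifying that the hypotheses of the two theorems are mutually compatible and can be assumed simultaneously, namely the quasi-uniformity of $X$, sufficiently small $h$, the smoothness $\Phi \in C^{2n}(\Omega)$ required for the $\lambda_{\max}$ bound, and the dimensional constraint $d \le 3$ required for the $\lambda_{\min}$ bound. Under these joint assumptions both estimates may be invoked at once, and the stated bound $\cond_2(A) \le C\,h^{-4\tau}$ follows immediately.
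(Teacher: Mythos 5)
Your proposal is correct and follows exactly the paper's own argument: the paper likewise obtains the bound by inserting the estimates $\lambda_{\max}(A)\le Ch^{-d-4}$ from Theorem \ref{thm-cond2} and $\lambda_{\min}(A)\ge Ch^{4\tau-d-4}$ from Theorem \ref{thm-cond1} into $\cond_2(A)=\lambda_{\max}(A)/\lambda_{\min}(A)$. Your additional remark about checking the mutual compatibility of the hypotheses is a sensible (and slightly more careful) touch, but there is no substantive difference in approach.
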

\section{Numerical Examples}\label{sect-numerical}
In this section, results of some numerical experiments are reported to verify the theoretical bounds of the preceding sections.
The convergence of the numerical solution $u^h$ toward the true solution $u$ are investigated and the rates of convergence are estimated numerically.
Besides, the condition number of the final system is
 estimated.
The computational rate of convergence $p$ is approximated in two successive levels $h_1$ and $h_2$ via
\[
p=\log{\frac{{{{\| {u - {u^{{h_1}}}} \|}_{t,\Omega }}}}{{{{\| {u - {u^{{h_2}}}} \|}_{t,\Omega }}}} }\Big/ \log{{\frac{{{h_1}}}{{{h_2}}}}  },
\]
for $t=0,2$. Moreover, to estimate $H^2$ error and convergence rates we use
$$
\|u-u^h\|_{2,\Omega}\approx |||u-u^h|||.   
$$
From Theorems \ref{th4-11} and \ref{thm4-11}, one finds that the theoretical rates are $k-t$ for
 $u \in {H^k}(\Omega )$.

The following test problem in $\R^2$ is considered
\[{{L}}u =  - \frac{{{\partial ^2}u}}{{\partial {(x^1)^2}}} - \frac{{{\partial ^2}u}}{{\partial {(x^2)^2}}} + \frac{{\partial u}}{{\partial x^1}} + \frac{{\partial u}}{{\partial x^2}} + u = f,\quad \mbox{in}\; \Omega. \]
Assume that the exact solution is given by
$
 u^*(x):= {\left\| x \right\|_2^\kappa},
$
over the computational domain
$\Omega  = \left\{ {x \in {\mathbb{R}^2}\,\,:\,\,\,\|x\|_2 < 1} \right\}$.
To form the trial space $U_{\Phi,X}$, we employ the Whittle-Mat\'{e}rn-Sobolev kernel
\[
\Phi (x) = \left(\varepsilon \|x\| \right)^{\tau  - d/2}K_{\tau-d/2}(\varepsilon\| x \|)\quad \mbox{for all} \; x \in {\mathbb{R}^d}, \quad \tau>d/2,
\]
where ${K_\beta }$ is the modified Bessel function of the second kind of order $\beta$, and $\varepsilon>0$ is a shape parameter.
This kernel has the Fourier transform
$\widehat \Phi (\omega)=C{(1 + {\left\|\varepsilon \omega \right\|^2})^{ - \tau }}$ and its native space is identical with $H^\tau(\R^d)$.

Discretization is done by using a series set of trial points $X\subset \Omega\cup\partial \Omega$ with different fill-distance $h$.
However, in this scheme it is not mandatory to locate some trial points on $\partial \Omega$.
We note that, to enforce the boundary conditions in some collocation methods such as the symmetric kernel-based method of \cite{Fra1,Fra2,ZW3}, some trial points have to be qualitatively located on the boundary.
All reported errors in $L^2(\Omega)$ norm 
are RMS errors approximated by using the fixed set of $7668$ equidistant points in $\Omega$. Also,
the error is analyzed in $\|\cdot\|_{0,\partial\Omega}$
with $1000$ equidistant points on $\partial \Omega$. All integrals in DLSP variational form
are computed via the Gauss-Legendre quadrature rule with sufficient number of integration points in angular and radial directions.
We do not employ any special technique to deal with the problem of ill-conditioning.

For a nonnegative integer $k$, it is well-known that
${u^*} \in {H^k}(\Omega )$ if $\kappa  > k - d/2$.
In our numerical example, we set $\kappa=4$ to have ${u^*} \in {H^k}(\Omega )$ for $k < 5 $.
We set $q=0$ in the least-squares approach, $\varepsilon = 10$ for shape parameter, and $\tau= 3,4,5,6$ for kernel function $\Phi$.
From Theorems \ref{th4-11} and \ref{thm4-11} the parameters $\tau$ and $k$ should satisfy $\tau  \ge k \ge 4$. Thus the cases
$\tau=3,4$ exclude the requirements of the theory, but yet perform the $L^2$ convergence; 
see Table \ref{table-1}. For brevity, the notation $e^h=u^*-u^h$ is used.
In cases $\tau=5,6$, the table contains also the theoretical orders of Theorems \ref{th4-11} and \ref{thm4-11} and the bound \eqref{bound_energynorm}.
In all columns, except that of $|||e^h|||$, the numerical orders at finer levels are better than the expected theoretical orders.
Note that the orders do not improve when going from $\tau=5$ to $\tau=6$ because of the limitation caused by the smoothness of true solution $u^*$.

According to the results of subsection \ref{sect-conditioning}, there is a direct relation between the smoothness of trial kernel and the conditioning
of final system, where a higher smoothness leads to a larger condition number.
Since in this paper we do not focus on preconditioning techniques, for small values of $h$ the results suffer from a severe ill-conditioning, specially for higher values $\tau=5,6$. This is the reason why fewer rows are reported in this cases.
Table \ref{table-2} shows  the condition numbers of the final linear systems together
with the numerical orders.
In all cases, as $h \to 0$, the approximate rate of
conditioning of the final matrix is of $\mathcal O({h^{ - 4\tau}})$ as proven
in Corollary \ref{cor-condioton}. Unsatisfactory results for small values of $h$ and higher values of $\tau$ are obtained.

\begin{table}[]
\begin{center}
\caption{\small{Approximate errors and orders for DLSP using Whittle-Mat\'{e}rn-Sobolev kernel with $\varepsilon=10$.}}
\begin{tabular}{ccccccccc}
\hline \hline                 &&&& \small{ $\tau=3$} &&&&\\\hline\label{table-1}
$h$ & \small{ ${\| e^h \|_{0,\Omega }}$}  &\small{order}& \small{${\| e^h\|_{0,\partial \Omega }}$}  &\small{order} & {${\| Le^h  \|_{0,\Omega }}$} & \small{order}& \small{$|||e^h|||$}& \small{order}\\\hline
\small{0.25}       &\small{4.0771e-01}   &  \small{ -}       &\small{9.4412e-01}    &\small{-}   &\small{9.3784e+00}
& \small{-}&\small{6.9802e+01}&\small{-} \\
\small{$h/2$}   &\small{1.3016e+00}   &\small{-1.6747}  &\small{2.5902e-01} &\small{1.8659}    &\small{1.7612e+01}  &\small{-0.9092}&\small{1.5023e+02}&\small{-1.1058} \\
\small{$h/4$}  &\small{1.4516e+00}   &\small{0.1574}   &\small{3.3942e-02}  &\small{2.9319}   &\small{1.6565e+01}  &\small{0.0884}&\small{1.5559e+02}&\small{-0.0506}\\
\small{$h/6$}  &\small{1.1913e+00}   &\small{0.4874}   &\small{1.3580e-02}  &\small{2.2093}   &\small{1.4866e+01}  &\small{0.2670}&\small{2.0259e+02}&\small{-0.6510}\\
\small{$h/8$}  &\small{7.9384e-01}   &\small{1.4111}   &\small{7.8229e-03}  &\small{1.9172}    &\small{1.1875e+01} &\small{0.7806}&\small{2.6821e+02}&\small{-0.9753}\\
\small{$h/10$}&\small{4.9996e-01}   &\small{2.0721}   &\small{4.7968e-03}  &\small{2.1920}    &\small{9.1629e+00} &\small{1.1621}&\small{3.1616e+02}&\small{-0.7369}\\
\small{$h/12$} &\small{4.3891e-01}  &\small{0.7142}   &\small{3.0758e-03}  &\small{2.4374}    &\small{8.6532e+00} &\small{0.3139}&\small{3.4881e+02}&\small{-0.5391}\\
\small{$h/14$} &\small{2.8627e-01} &\small{2.7725}    &\small{2.0804e-03}    &\small{2.5366}    &\small{6.8777e+00} &\small{1.4898}&\small{3.7222e+02}&\small{-0.4216}\\\hline
\small{Theory}      &\small{}   &   \small{-}       &\small{}    &\small{-}    &\small{}
& \small{-} &\small{}&\small{-}\\
\hline\hline
                        &&&&\small{  $\tau=4$}  &&&& \\\hline
\small{0.25}      &\small{3.5863e-01}   &   \small{-}       &\small{8.4776e-01}    &\small{-}    &\small{1.0102e+01}
& - &\small{6.3975e+01}&\small{-}\\
\small{$h/2$}  &\small{1.2905e+00}  &\small{-1.8473}   &\small{1.4867e-01}   & \small{2.5013}  &\small{1.5338e+01}  &\small{-0.6024}&\small{9.1456e+01}&\small{-0.5156}\\
\small{$h/4$}  &\small{7.7511e-01}  &\small{0.7354}    &\small{2.5646e-02}    &\small{2.5353}   &\small{1.1615e+01}  &\small{0.4011}&\small{1.1666e+02}&\small{-0.3512}\\
\small{$h/6$}  &\small{3.6300e-01}  &\small{1.8709}    &\small{8.9105e-03}    &\small{2.6073}   &\small{7.8091e+00}  &\small{0.9791}&\small{1.3099e+02}&\small{-0.2857}\\
\small{$h/8$}  &\small{1.1878e-01}  &\small{3.8834}    &\small{3.4407e-03}    &\small{3.3077}    &\small{4.4627e+00} &\small{1.9528}&\small{1.1720e+02}&\small{0.3867}\\
\small{$h/10$}&\small{4.3416e-02}  &\small{4.5102}    &\small{1.3764e-03}    &\small{4.1058}   &\small{2.6917e+00} &\small{2.2558}&\small{9.0872e+01}&\small{1.1445}\\
\small{$h/12$} &\small{2.5797e-02} &\small{2.8553}    &\small{7.4864e-04}    &\small{3.3402}    &\small{2.0585e+00} &\small{1.4708}&\small{8.4852e+01}&\small{0.3705}\\
\small{$h/14$} &\small{1.1324e-02} &\small{5.3411}    &\small{4.0271e-04}    &\small{4.0222}    &\small{1.3795e+00} &\small{2.5968}&\small{7.2102e+01}&\small{1.0562}\\\hline
\small{Theory}      &\small{}   &   \small{-}       &\small{}    &\small{-}    &\small{}
& \small{-} &\small{}&\small{-}\\
\hline\hline
                      &&&&\small{  $\tau=5$}  &&&& \\\hline
\small{0.25}                 &\small{4.5055e-01}   &    \small{-}        &\small{6.9867e-01}      &\small{-}    &\small{1.0848e+01}
&  - &\small{5.5563e+01}&\small{-}\\
\small{$h/2$} &\small{1.0304e+00}    &\small{-1.1935}  &\small{1.0777e-01}  &\small{2.6967}   &\small{1.3647e+01} &\small{-0.3312}&\small{6.8825e+01}&\small{-0.3088}\\
\small{$h/4$} &\small{2.9679e-01}     &\small{1.7957}   &\small{1.7418e-02}  &\small{2.6293}   &\small{7.1083e+00} &\small{0.9410}&\small{7.8402e+01}&\small{-0.1889}\\
\small{$h/6$} &\small{6.0793e-02}     &\small{3.9104}   &\small{3.8090e-03}  &\small{3.7492}   &\small{3.2229e+00} &\small{1.9508}&\small{5.5880e+01}&\small{0.8368}\\
\small{$h/8$} &\small{1.0943e-02}     &\small{5.9607}   &\small{9.6698e-04}  &\small{4.7656}   &\small{1.4055e+00} &\small{2.8846}&\small{3.3091e+01}&\small{1.8212}\\
\small{$h/10$} &\small{2.6877e-03}    &\small{6.2919}  &\small{2.8314e-04}  &\small{5.5042}  &\small{4.0552e-01}                &\small{3.0887}&\small{1.8827e+01}&\small{2.5275}\\\hline
\small{Theory}      &\small{}   &   \small{5}       &\small{}    &\small{4.5}    &\small{}
& \small{3} &\small{}&\small{3}\\
\hline\hline
                      &&&&\small{  $\tau=6$}  &&&& \\\hline
\small{0.25}              &\small{5.4390e-01}   &     -              &\small{5.9587e-01}      &  -    &\small{1.1171e+01}
&  - &\small{4.9306e+01}&\small{-}\\
\small{$h/2$ } &\small{7.7146e-01}   &\small{-0.5043}  &\small{8.1924e-02}  &\small{2.8626}  &\small{1.1857e+01} &\small{-0.0860}&\small{5.3803e+01}&\small{-0.1259}\\
\small{$h/4$ } &\small{8.6708e-02}   &\small{3.1534}  &\small{9.9357e-03}  &\small{3.0436}   &\small{3.9212e+00} &\small{1.5964}&\small{4.4618e+01}&\small{0.2701}\\
\small{$h/6$ } &\small{8.2390e-03}   &\small{5.8049}  &\small{1.3425e-03}  &\small{4.9366}   &\small{1.2420e+00} &\small{2.8355}&\small{1.9801e+01}&\small{2.0037}\\
\small{$h/8$ } &\small{9.3344e-04}   &\small{7.5700}  &\small{2.3671e-04}  &\small{6.0326}   &\small{4.3282e-01} &\small{3.6644}&\small{8.1892e+00}&\small{3.0690}\\\hline
\small{Theory}      &\small{}   &   \small{5}       &\small{}    &\small{4.5}    &\small{}
& \small{3} &\small{}&\small{3}\\
\hline
\end{tabular}
\end{center}
\end{table}

\begin{table}[]
\begin{center}
\caption{\small{Condition numbers and their orders of DLSP matrix for various values of $\tau$}} 
\begin{tabular}{ccccccccc}
\hline
\hline\label{table-2}
$h$ & \small{$\tau=3$ }  &\small{order}& \small{$\tau=4$ }  &\small{order} & {$\tau=5$ } & \small{order}& \small{$\tau=6$ }& \small{order}\\\hline
\small{0.25}        &\small{2.1442e+00}   &   \small{-}         &\small{2.6245e+00}  &\small{-}   &\small{2.7575e+00}
&\small{-}&\small{2.2632e+00}&\small{-} \\
\small{$h/2$}  &\small{6.1697e+00}   &\small{-1.5248}  &\small{5.0880e+01} &\small{-4.2770}    &\small{5.7017e+02}  &\small{--7.6919}&\small{5.9607e+03}&\small{-11.3629} \\
\small{$h/4$}  &\small{2.3461e+03}   &\small{-8.5708}   &\small{3.4348e+05}  &\small{-12.7208}   &\small{4.9499e+07}  &\small{-16.4056}&\small{6.8306e+09}&\small{-20.1281}\\
\small{$h/6$}  &\small{1.5702e+05}   &\small{-10.3674}   &\small{1.1296e+08}  &\small{-14.2939}   &\small{8.1722e+10}  &\small{-18.2732}&\small{5.8350e+10}&\small{-22.3261}\\
\small{$h/8$}  &\small{3.6020e+06}   &\small{-10.8901}   &\small{8.0450e+09}  &\small{-14.8280}    &\small{1.8394e+13} &\small{-18.8227}&\small{4.2756e+16}&\small{-22.9309}\\
\small{$h/10$}&\small{4.1449e+07}   &\small{-10.9480}   &\small{2.2430e+11}  &\small{-14.9138}    &\small{1.2496e+15} &\small{-18.9050}&\small{-}&\small{-}\\
\small{$h/12$} &\small{4.0986e+08}  &\small{-12.5677}   &\small{3.3015e+12}  &\small{-14.7496}    &\small{-}
&\small{-}&\small{-}&\small{-}\\
\small{$h/14$} &\small{2.1131e+09} &\small{-10.6396}    &\small{3.3761e+13}    &\small{-15.0822}    &\small{-} &\small{-}&\small{-}&\small{-}\\\hline
\end{tabular}
\end{center}
\end{table}
\newpage
\section{Conclusion}
A least-squares variational kernel-based method for solving the general second order elliptic problem with
nonhomogenous Dirichlet boundary conditions is given in this paper.
One of the attractive features of the method is that the approximating space
is not subject to the LBB condition. Besides, the discretization yields a positive definite system while the original PDE may not be symmetric at all. The approximation space is formed via kernels that reproduce Sobolev spaces as their native spaces.
We show that the DLSP formulations using sufficiently smooth kernels, which reproduce $H^\tau(\Omega)$,
can converge at the optimal rate in $H^t(\Omega)$-norm, with $4 - k \le t \le k$, where $\tau \ge k \ge 4$.
The condition number of the final least-squares system is also estimated in terms of the smoothness of the basis function and the discretization parameter.
Some parts of our analysis are subjected to a {\em conjecture} that demands an independent study in the theory of kernel approximations. See \eqref{eq1_231}.
Finally, we have reported some numerical results to confirm the theoretical bounds.
{As a downside, the condition numbers grow
at hight algebraic rates for smooth trial kernels. This paper does not concern special approaches or any preconditioning technique
to overcome this problem. However, we suggest some possible approaches here.
The stability estimates
may be greatly improved if a similar theory could be derived for {\em polyharmonic kernels} by scaling the points with mesh norm $h$ and carrying
the computation in the blown-up situation. More details can be found in \cite{Iske:2003-1,davydov-schaback:2019-1}.
As another possibility,
one can use the {\em localized bases for kernel space} \cite{Fuselier-et-all:2013}
instead of the global basis $U_{\Phi,X}$ to improve the condition numbers. The use of ``greedy algorithms" in trial space will be another possible approach \cite{wendland-schaback:2000,schaback:2014-1}. The compactly supported kernels in a multiscale setting can also be used to improve the conditioning at the price of a more computational cost \cite{Farrell-Wendland:2013}. 
Finally, the application of the method on the corresponding first order system of equations needs less smooth basis functions leading to a great improvement in the numerical conditioning.  
Since all the proposals above are rather involved and contain their own technical details, we do not peruse them further and leave them for
future studies.}

\newpage


\begin{thebibliography}{amsplain}

\bibitem{Adam} R. Adams, \emph{Sobolev Spaces,} Academic Press, New York, 1975.

\bibitem {ADN1} S. Agmon, A. Douglis, and L. Nirenberg, Estimates near the boundary for solutions of elliptic
partial differential equations satisfying general boundary conditions II, \emph{Comm. Pure Appl.
Math.,} 17: 35-92, 1964.

\bibitem{AZIZ} A. Aziz, R. Kellogg, and A. Stephens, Least-squares methods for elliptic systems, \emph{Math. Comput.,}
 44: 53-70, 1985.

\bibitem{Ba} G. A. Baker, Simplified proofs of error estimates for the least squares method for Dirichlet's
problem, \emph{Math. Comput.,}  27: 229-235, 1973.

\bibitem {PB1} P. Bochev, Least-squares finite element methods for first-order elliptic systems, \emph{Int. J. Num. Anal. Model.,}
1: 49-64, 2004.

\bibitem {PB2} P. Bochev and M. Gunzburger, Finite element methods of  least-squares types. \emph{SIAM Rev.,} 40: 789-837,
1998.

\bibitem {PB3} P. Bochev, M. Gunzburger, \emph{Least-squares finite element methods,} Springer, New York, 2009.

\bibitem{PB5} P. Bochev, M. Gunzburger, Analysis of least-squares finite element methods for the Stokes
equations, \emph{Math. Comput.,} 63: 479-506, 1994.

\bibitem{Br3} J. Bramble, R. Scott, Simultaneous approximation in scales of Banach
spaces, \emph{Math. Comput.,} 32: 947-954, 1978.

\bibitem{Br4}  J. Bramble, V. Thom\'{e}e, Semidiscrete-least squares methods for a parabolic
boundary value problem, \emph{Math. Comput.,} 26: 633-648, 1972.

\bibitem{davydov-schaback:2019-1}
{O. Davydov, R. Schaback},
Optimal stencils in {S}obolev spaces,
\emph{IMA J. Numer. Anal.},
39: {398--422}, 2019.

\bibitem{Bram1} J. H. Bramble, J. A. Nitsche, A generalized Ritz-least-squares method for Dirichlet problems,
\emph{SIAM J. Numer. Anal.,} 10: 81-93, 1973.

\bibitem{Bram2} J. H. Bramble, A. H. Schatz,  Rayleigh-Ritz-Galerkin-methods for Dirichlet's problem using
subspaces without boundary conditions, \emph{Comm. Pure Appl. Math.,}  23: 653-675, 1970.

\bibitem{Bram3} J. H. Bramble, A. H. Schatz, Least-squares for $2m$th order elliptic boundary-value problems,
\emph{Math. Comput.,} 25: 1-32, 1971.

\bibitem{Brezzi} F. Brezzi, M. Fortin,  \emph{Mixed and Hybrid Finite Element Methods,} Springer-Verlag, New York, 1991.

\bibitem{Bu1} J. Buescu, A. C. Paix\~{a}o, Positive definite matrices and differentiable reproducing kernel inequalities, \emph{J. Math.
Anal. Appl.,} 320(1):279-292, 2006.

\bibitem{Bu2} J. Buescu and A. C. Paix\~{a}o. On differentiability and analyticity of positive definite functions.\emph{J. Math.
Anal. Appl.,} 375(1): 336-341, 2011.

\bibitem{cheung-ling-schaback:2018} K. C. Cheung, L. Ling, R. Schaback, $H^2$-convergence of least squares kernel collocation methods,
\emph{SIAM J. Numer. Anal.,} 56: 614-633, 2018.

\bibitem{Ekeland} I. Ekeland, R. Temam, \emph{Convex Analysis and Variational Problems,}
North-Holland, Amsterdam, 1976.

\bibitem{Farrell-Wendland:2013} P. Farrell, H. Wendland, RBF multiscale collocation for second order elliptic boundary value problems, \emph{SIAM J. Numer. Anal.} 51:2403-2425, 2013.

\bibitem{Fra1} C. Franke,  R. Schaback, Convergence order estimates of meshless collocation
methods using radial basis functions, \emph{Adv. Comput. Math.,} 8: 381-399, 1998.

\bibitem{Fra2} C. Franke, R. Schaback, Solving partial differential equations by collocation
using radial basis functions, \emph{Appl. Math. Comput.,} 93: 73-82, 1998.

\bibitem{Girault} V. Girault, P. Raviart, \emph{Finite Element Methods for Navier-Stokes Equations,} Springer, Berlin, 1986.


\bibitem{Fuselier-et-all:2013} E. J. Fuselier, T. Hangelbroek, F. J. Narcowich, J. D. Ward, and G. B. Wright. Localized bases for kernel spaces on the unit sphere, \emph{SIAM J. Numer. Anal.,} 15 (2013), 2538-2562

\bibitem{hangelbroek-et-al:2017}T. Hangelbroek, F. J. Narcowich, C. Rieger, J. D. Ward, An inverse theorem
for compact Lipschitz regions in $\mathbb{R}^d$ using localized kernel bases, \emph{Math. Comput.,} 87: 1949-1989, 2017.

\bibitem{Hon} Y. C. Hon, R. Schaback, On unsymmetric collocation by radial basis functions.
\emph{J. Appl. Math. Comput.,} 119: 177-186, 2001.

\bibitem{Iske:2003-1}
A. Iske, {On the approximation order and numerical stability of local lagrange interpolation by polyharmonic splines},
\emph{International Series of Numerical Mathematics 145}, \emph{Modern Developments in Multivariate Approximation},
In: {W. Haussmann, K. Jetter, M. Reimer, J. St\"{o}ckler},
{Birkh\"{a}user Verlag},
{Basel},
{153--165}, 2003.


\bibitem {Jia1} B.-N. Jiang, \emph{The Least-Squares Finite Element Method. Theory and Applications in Computational
Fluid Dynamics and Electromagnetics,} Springer-Verlag, Berlin, 1998.

\bibitem{Kan} E. J. Kansa, Multiquadrics - a scattered data approximation scheme with
applications to computational fluid-dynamics. i. surface approximations and partial
derivative estimates, \emph{Comput. Math. Appl.,} 19: 127-145, 1990.

\bibitem{Lions1} J. L. Lions, E. Magenes, \emph{Non-Homogeneous Boundary Value Problems and Applications}, Vol. 1,
Springer, Berlin, 1972.

\bibitem{Ma} E. Massa, A. P. Peron, C. Piantella, Estimates on the derivatives and analyticity of positive
definite functions on $\mathbb{R}^d$, \emph{Anal. Math.}, 43: 89-98, 2017.

\bibitem{mirzaei:2018-1} D. Mirzaei, A Petrov-Galerkin kernel approximation on the sphere, \emph{SIAM J. Numer. Anal.} 56: 274-295, 2018.

\bibitem{NAR1} F. J. Narcowich, J. D. Ward, Scattered-data interpolation on ${\mathbb{R}^n}$: error estimates
for radial basis and band-limited functions, \emph{SIAM J. Math. Anal.,} 36:284-300, 2004.

\bibitem{NAR2} F. J. Narcowich, J. D. Ward, and H. Wendland, Sobolev bounds on functions with
scattered zeros, with applications to radial basis function surface fitting, \emph{Math. Comput.,} 74: 743-763, 2005.

\bibitem{NAR3} F. J. Narcowich, J. D. Ward, and H. Wendland, Sobolev error estimates and a Bernstein
inequality for scattered data interpolation via radial basis functions, \emph{Constr. Approx.,} 24: 175-186, 2006.

\bibitem{NAR4} F. J. Narcowich,  X. Sun, J. D. Ward, and H. Wendland, Direct and inverse Sobolev error
estimates for scattered data interpolation via spherical basis functions, \emph{Found. Comput. Math.,} 7: 369-390, 2007.

\bibitem{NAR5} F. J. Narcowich, J. D. Ward, Generalized Hermite interpolation via matrix-valued conditionally positive
definite functions, \emph{Math. Comput.,} 63: 661-687, 1994.

\bibitem {YAR} Ya. A. Roitberg, A theorem about the complete set of isomorphisms for systems elliptic in the sense of Douglis and Nirenberg, \emph{Ukrain. Mat. J.}, 447-450, 1975.

\bibitem {YAR2} Ya. A. Roitberg, Z. Seftel, A theorem on homeomorphisms for elliptic systems and its applications,
\emph{Math. USSR Sbornik,} 7: 439-465, 1969.

\bibitem{SCH1} R. Schaback, Approximation by radial basis functions with finitely many centers,
\emph{Constr. Approx.,} 12: 331-340, 1996.

\bibitem{SCH2} R. Schaback, Improved error bounds for scattered data interpolation by radial basis functions,
\emph{Math. Comput.,} 68: 201-216, 1999.

\bibitem{SCH3} R. Schaback, H. Wendland, Inverse and saturation theorems for radial basis function interpolation,
\emph{Math. Comput.,}  71: 669-681, 2002.

\bibitem{SCH4} R. Schaback, Error estimates and condition numbers for radial basis function interpolation,
\emph{Adv. Comput. Math.,}  3: 251-264, 1995.

\bibitem{SCH6} R. Schaback, H. Wendland, Inverse and saturation theorems for radial basis function interpolation,
\emph{Math. Comput.,} 71: 669-681, 2002.

\bibitem{SCH7} R. Schaback, Convergence of unsymmetric kernel-based meshless collocation methods,
\emph{SIAM J. Numer. Anal.,} 45: 333-351, 2005.

\bibitem{schaback:2014-1} R. Schaback, Greedy sparse linear approximations of functionals from nodal data,
\emph{Numer. Algorithms,}  67:531-547, 2014.

\bibitem{MS} M. Schechter, On $L^p$ estimates and regularity, \emph{ZZ, Math. Scand.}, 13:47-69, 1963.

\bibitem{HW1} H. Wendland, Piecewise polynomial, positive definite and compactly supported
radial functions of minimal degree, \emph{Adv. Comput. Math.,} 4: 389-396, 1995.

\bibitem{HW2} H. Wendland, Meshless Galerkin methods using radial basis functions, \emph{Math. Comput.,} 68: 1521-1531, 1999.

\bibitem{wendland-schaback:2000} H. Wendland, R. Schaback, Adaptive greedy techniques for approximate solution of large rbf systems, \emph{Numer. Algorithms,} 24:239-254, 2000.

\bibitem{HW4} H. Wendland, Divergence-free kernel methods for approximating the Stokes problem, \emph{SIAM J.
Numer. Anal.,}  47(4): 3158-3179, 2009.

\bibitem{HW3} H. Wendland, \emph{Scattered Data Approximation}, Cambridge University Press, 2005.

\bibitem{W.W} W. L. Wendland, \emph{Elliptic Systems in the Plane}, Pitman, London, 1979.

\bibitem{ZW1} Z. Wu, R. Schaback, Local error estimates for radial basis function interpolation of scattered data,
\emph{IMA J. Numer. Anal.,} 13: 13-27, 1993.

\bibitem{ZW2} Z. Wu, Compactly supported positive definite radial functions, \emph{Adv. Comput. Math.,} 4:283-292, 1995.

\bibitem{ZW3} Z. Wu, Hermite-Birkhoff interpolation of scattered data by radial basis functions,
\emph{Approx. Theory Appl.,} 8: 1-10, 1992.

\bibitem{Zeidler} E. Zeidler, \emph{Nonlinear Functional Analysis and its Applications. III: Variational
Methods and Optimization}, Springer-Verlag, New York, 1986.

\bibitem{Z. Wu}
Z. Wu, J. Liu, 
Meshless method for numerical solution of PDE using Hermitian interpolation with radial basis,
\emph{Lecture Notes Series on Computing,} 11: 209-220, 2004.

\end{thebibliography}
\end{document}